\newtheorem{thm}{Theorem}
\newtheorem*{thmnonum}{Theorem}
\newtheorem{lem}[thm]{Lemma}
\newtheorem*{rem}{Remark}
\newtheorem*{ack}{Acknowledgements}
\newcommand{\SL}{{\rm SL}}
\renewcommand{\H}{\mathbb{H}}
\newcommand{\Q}{\mathbb{Q}}
\newcommand{\Z}{\mathbb{Z}}
\newcommand{\R}{\mathbb{R}}
\newcommand{\F}{\mathbb{F}}
\newcommand{\Aut}{{\rm Aut}}
\newcommand{\legen}[2]{\genfrac{(}{)}{}{}{#1}{#2}}
\newcommand{\Gen}{{\rm Gen}}
\newcommand{\Ad}{{\rm Ad}}
\newcommand{\ord}{{\rm ord}}
\begin{document}

\title[Quadratic forms and the Petersson inner product]{Integers represented by positive-definite quadratic forms and Petersson inner products}
\author{Jeremy Rouse}
\address{Department of Mathematics and Statistics, Wake Forest University,
  Winston-Salem, NC 27109}
\email{rouseja@wfu.edu}
\subjclass[2010]{Primary 11E20; Secondary 11E25, 11F27, 11F30}

\begin{abstract}
Let $Q$ be a positive-definite quaternary quadratic form with integer coefficients. We study the problem of giving bounds on the largest positive integer $n$ that is locally represented by $Q$ but not represented. Assuming that
$n$ is relatively prime to $D(Q)$, the determinant of the Gram matrix of $Q$,
we show that $n$ is represented provided that 
\[
 n \gg \max \{ N(Q)^{3/2 + \epsilon} D(Q)^{5/4 + \epsilon}, N(Q)^{2 + \epsilon} D(Q)^{1 + \epsilon} \}.
\]
Here $N(Q)$ is the level of $Q$. We give three other bounds that hold
under successively weaker local conditions on $n$. 

These results are proven by bounding the Petersson norm of the
cuspidal part of the theta series, which is accomplished using an
explicit formula for the Weil representation due to Scheithauer.
\end{abstract}

\maketitle

\section{Introduction and Statement of Results}
\label{intro}

If $Q$ is a positive-definite quadratic form with integer coefficients,
which integers are represented by $Q$? This innocous sounding question has
resulted in some very deep mathematics, and there are still not complete answers to this question. For example, there is no unconditional answer to
the question of which integers are represented by $x^{2} + y^{2} + 7z^{2}$ (see \cite{Kap7} and \cite{Reinke}), and it is unknown whether every odd number can be represented by $x^{2} + xy + 5y^{2} + 2z^{2}$ (see \cite{Kap}, \cite{451}).

The question raised in the first paragraph has a long history, beginning
with Fermat's claimed classification of which integers are sums of two squares.
The first analytic result handling arbitrary positive-definite forms
is due to Tartakowsky \cite{Tart} in 1929. To state the result, we need some
notation.

Let $Q(\vec{x}) = \frac{1}{2} \vec{x}^{T} A \vec{x}$ be a
positive-definite quadratic form in $r$ variables, and assume that $A$
has integer entries and even diagonal entries. Let $D(Q) = \det(A)$ be
the discriminant of $Q$. Define the \emph{level} of $Q$, $N(Q)$, to be
the smallest positive integer $N$ so that $NA^{-1}$ has integer
entries and even diagonal entries.

We say that $Q$ \emph{locally represents} the integer
$n$ if there is a solution to the congruence
$Q(\vec{x}) \equiv n \pmod{p^{k}}$ for all primes $p$ and all
$k \geq 1$. We say that $n$ is \emph{primitively locally represented}
if one can solve $Q(\vec{x}) \equiv n \pmod{p^{k}}$ for all primes $p$
and all $k \geq 1$ with the additional restriction that $p$ does not
divide all the entries of $\vec{x}$. 

\begin{thmnonum}[Tartakowsky, 1929]
Assume that $r \geq 5$. Then every sufficiently large integer $n$ that
is locally represented by $Q$ is represented. If $r = 4$, the same conclusion
holds provided that $n$ is primitively locally represented.
\end{thmnonum}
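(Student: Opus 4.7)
The plan is to study the theta series
\[
\theta_Q(z) = \sum_{\vec{x} \in \Z^r} e^{2\pi i Q(\vec{x}) z} = \sum_{n \geq 0} r_Q(n) e^{2\pi i n z},
\]
a modular form of weight $k = r/2$, level $N(Q)$, and some nebentypus character. The space of such forms decomposes orthogonally as Eisenstein plus cuspidal, giving $\theta_Q = E_Q + f_Q$ and hence $r_Q(n) = a_{E_Q}(n) + a_{f_Q}(n)$. The goal is to show that the main term dominates the error term for large $n$ satisfying the appropriate local hypotheses.

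For the main term, I would invoke the Siegel--Weil formula to identify $E_Q$ with the genus theta series, so that by Siegel's mass formula $a_{E_Q}(n)$ factors as a product $\prod_p \beta_p(n,Q)$ of local densities. A standard local-density analysis then yields $a_{E_Q}(n) \gg_{Q,\epsilon} n^{k-1-\epsilon}$ whenever $n$ is locally represented (and, in the $r = 4$ case, primitively locally represented --- this hypothesis is precisely what prevents a bad local factor from vanishing). For the error term, I would bound the cuspidal coefficients: if $r$ is even, then $k \geq 2$ is an integer and Deligne's bound gives $a_{f_Q}(n) \ll_{Q,\epsilon} n^{(k-1)/2 + \epsilon}$; if $r$ is odd, then $k$ is a half-integer $\geq 5/2$ and Iwaniec's subconvex bound for half-integer weight cusp forms gives $a_{f_Q}(n) \ll_{Q,\epsilon} n^{k/2 - \delta}$ for some absolute $\delta > 0$.

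Comparing exponents, the main-term exponent $k-1$ strictly exceeds the error-term exponent in every case: for even $r \geq 6$ the gap is $(k-1)/2 - \epsilon$; for $r = 5$, Iwaniec's bound yields a gap of about $1/4$; and for $r = 4$ the gap is $1/2 - \epsilon$, which is positive provided the main term does not vanish, which is the content of the primitivity hypothesis. Thus $r_Q(n) > 0$ for all sufficiently large $n$ meeting the local hypotheses.

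The step I expect to be the main obstacle is the uniform lower bound on $a_{E_Q}(n)$. At primes $p \nmid 2D(Q)$ the local density is computed directly from Siegel's formula for $\#\{\vec{x} : Q(\vec{x}) \equiv n \pmod{p^t}\}$ and is easily estimated, but at bad primes $p \mid 2D(Q)$ one needs a Jordan-block analysis to verify positivity of $\beta_p(n,Q)$ under the stated local hypothesis and to control the full product, together with the archimedean mass contribution, so that $a_{E_Q}(n)$ remains of order $n^{k-1-\epsilon}$. The $r = 4$ borderline turns precisely on this local analysis, since the distinction between locally and primitively locally representable integers lies entirely in the possible vanishing of a single bad local factor.
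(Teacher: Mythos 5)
Your sketch is correct in outline and is exactly the approach the paper itself takes: the paper only cites Tartakowsky's theorem (to \cite{Tart}) without proof, but its entire method for Theorem~\ref{main} is the quantitative version of your argument in the $r=4$ case --- decompose $\theta_Q = E + C$, lower-bound $a_E(n)$ by a Jordan-block analysis of the local densities at bad primes (the paper's Lemma~\ref{localden}, which is precisely the ``main obstacle'' you flag), and upper-bound the cuspidal coefficients. The one point worth sharpening is that for $r=4$ the issue with non-primitive local representation is not that a bad local factor \emph{vanishes} but that $\beta_p(Q;n)$ can \emph{decay} like $p^{-\ord_p(n)}$ at an anisotropic prime (e.g.\ $n = 3\cdot 7^{2k}$ for $x^2+y^2+7z^2+7w^2$), whereas for $r\ge 5$ every form is isotropic over each $\Z_p$ and the bad local factors are bounded below uniformly in $n$; primitivity is what caps $\ord_p(n)$ at anisotropic primes and restores $a_E(n)\gg_Q n^{1-\epsilon}$.
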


Tartakowsky's result does not give a bound on the largest locally represented by non-represented integer $n$. Results of this type were given by Watson \cite{Watson}, who proved
that for $r \geq 5$, any locally represented integer $n \gg D(Q)^{5/(r-4) + 1/r}$
is represented by $Q$ if $5 \leq r \leq 9$ (and the same conclusion holds when $n \gg D(Q)$ if $r \geq 10$).

Watson did not make the implied constant in his estimate
explicit. Hsia and Icaza \cite{HsiaIcaza} gave completely explicit
results that prove that any locally represented integer
$n \gg D(Q)^{(n-2)/(n-4) + 2/n}$ is repesented using an algebraic
method. This improves Watson's exponent in the case that $r = 5$ and
$r = 6$.

The case of $r = 4$ variables is more subtle, because it is no longer
true that every sufficiently large locally represented integer is
represented.  Indeed, $x^{2} + y^{2} + 7z^{2} + 7w^{2}$ locally
represents every positive integer, but fails to represent any integer
of the form $3 \cdot 7^{2k}$ (as observed by Watson in
\cite{Watson2}). The first result in this direction was due to
Kloosterman \cite{Kloosterman} using a variant of the circle method.
Refined results in the $r = 4$ case were proven by Fomenko
(\cite{Fom}, \cite{Fom2}) by considering the theta series of $Q$ as a
weight $2$ modular form and using the Petersson inner product.  In
\cite{SP}, Schulze-Pillot gave a completely explicit bound in the case
that $r = 4$, showing that if $n$ is primitively locally represented
and $n \gg N(Q)^{14+\epsilon}$, then $n$ is represented by
$Q$. (Schulze-Pillot obtains the better bounds
$n \gg N(Q)^{9 + \epsilon}$ and $n \gg N(Q)^{4 + \epsilon}$ under the
assumptions that $N(Q)$ is squarefree, and $D(Q)$ is squarefree,
respectively.)

In 2006, Browning and Dietmann \cite{BrowningDietmann} applied a modern
form of the Hardy-Littlewood circle method and improved the exponents
in Hsia and Icaza's work in the cases that $r = 7, 8$ and $9$. Also, they
prove that in the $r = 4$ case that if stronger local conditions on
$n$ are chosen than any $n \gg \det(Q)^{2} \|Q\|^{8 + \epsilon}$
is represented, where $\|Q\|$ is the height of $Q$ (the absolute value
of the largest coefficient). The height $\|Q\|$ is between
$\det(Q)^{1/4}$ and $\det(Q)$ (depending on the successive minima of
the corresponding lattice). The local condition in question is referred
to by the authors as the `strong local solubility condition' and is the condition that there is some $\vec{x} \in (\Z/p\Z)^{4}$ so that
$Q(\vec{x}) \equiv n \pmod{p^{r}}$ with $p \nmid A\vec{x}$ (and here
$r = 1$ if $p > 2$ and $r = 3$ if $p = 2$).

In 2014, the author \cite{451} proved a stronger result in a more specific case.
If $r = 4$ and $\det(Q)$ is a fundamental discriminant, then every
locally represented integer $n \gg D(Q)^{2 + \epsilon}$ is represented. The
bound given in \cite{451} is ineffective (and is related to the possible
presence of an $L$-function with a Siegel zero) and not explicit. However,
it is amenable to explicit computations and it was used to prove that $x^{2} + 3y^{2} + 3yz + 3yw + 5z^{2} + zw + 34w^{2}$, which has $D(Q) = N(Q) = 6780$, represents all odd positive integers.

The focus of the present paper is to handle arbitrary quaternary
quadratic forms by bounding the Petersson norm of the cuspidal
part of the theta series. (The quaternary case has received a lot of
attention recently. See \cite{Bhar}, \cite{BH}, \cite{451},
\cite{REU2016}, and \cite{Coprime3}.)

The method we use relies on the explicit
formula of Scheithauer \cite{Scheithauer} for the Weil
representation. This yields a formula for the Fourier expansion of the
theta series at each cusp. Our main result is the following.

\begin{thm}
\label{main}
Suppose that $Q$ is a primitive, positive-definite, integer-valued quaternary
quadratic form.
\begin{itemize}
\item If $n$ is relatively prime to $D(Q)$ and is locally represented by $Q$, then $n$ is represented provided
\[
  n \gg \max \{ N(Q)^{3/2 + \epsilon} D(Q)^{5/4 + \epsilon},
  N(Q)^{2 + \epsilon} D(Q)^{1 + \epsilon} \}.
\]
\item If $n$ satisfies the strong local solubility condition for $Q$, then
$n$ is represented by $Q$ provided
\[
  n \gg \max \{ N(Q)^{5/4+\epsilon} D(Q)^{5/4 + \epsilon}, N(Q)^{3+\epsilon}
  D(Q)^{1+\epsilon} \}.
\]
\item If $n$ is primitively locally represented by $Q$,
then $n$ is represented provided 
\[
  n \gg \max \{ N(Q)^{5/2 + \epsilon} D(Q)^{9/4 + \epsilon},
N(Q)^{3 + \epsilon} D(Q)^{2 + \epsilon} \}.
\]
\item If $n$ is any positive integer that is locally represented by $Q$,
$n$ is not represented by $Q$ and
\[
  n \gg \max \{ N(Q)^{9/2 + \epsilon} D(Q)^{5/4 + \epsilon},
N(Q)^{5 + \epsilon} D(Q)^{1 + \epsilon} \},
\]
then there is an anisotropic prime $p$ so that $p^{2} | n$ and $np^{2k}$ is not represented by $Q$ for any $k \geq 0$.
\end{itemize}
\end{thm}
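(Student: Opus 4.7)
The plan is to decompose the theta series $\theta_Q$ as $\theta_Q = E + C$, where $E$ is its Eisenstein component and $C$ is its cuspidal component, both living in $M_2(\Gamma_0(N(Q)), \chi_Q)$. Since $r_Q(n) = a_E(n) + a_C(n)$, it suffices to produce a lower bound on $a_E(n)$ that beats an upper bound on $|a_C(n)|$; the four bullets in the theorem control the lower bound through successively weaker local hypotheses on $n$.

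First I would handle the Eisenstein coefficient. By Siegel's mass formula, $a_E(n)$ equals a constant times $n \prod_p \beta_p(Q,n)$, where $\beta_p(Q,n)$ is the local density. At primes $p \nmid D(Q)$ the local density is essentially constant in $n$. At primes dividing $D(Q)$, the lower bound depends on the strength of the local hypothesis: coprimality to $D(Q)$ lets one trivialize the bad primes; strong local solubility gives slightly weaker control; primitive local representation is weaker still; and general local representation is weakest. In each case the conclusion is a bound of the form $a_E(n) \gg n^{1-\epsilon}/g(N(Q), D(Q))$, where $g$ reflects the hypothesis.

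For the cuspidal contribution, I would combine Deligne's bound with a bound on the Petersson norm $\|C\|$. Since $\|C\|^2 \leq \|\theta_Q\|^2$, it suffices to bound $\|\theta_Q\|^2$. The theta series arises as the image of a fixed vector under the Weil representation of $\SL_2(\Z)$ acting on the discriminant group of $Q$, and Scheithauer's explicit formula gives the Fourier expansion of $\theta_Q$ at every cusp. Integrating against itself yields an estimate $\|\theta_Q\|^2 \ll N(Q)^a D(Q)^b$ with explicit exponents. Converting this into a bound on $|a_C(n)|$ can be done two ways: via the classical Deligne bound applied to a basis of newforms in the cuspidal spectrum, or via a cusp-by-cusp Poincar\'e-series/Kloosterman-sum estimate that directly exploits Scheithauer's cusp expansions. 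These two routes are what produce the two expressions inside each maximum in the statement.

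Combining the bounds yields $r_Q(n) > 0$ whenever $n$ exceeds the stated thresholds in the first three bullets. For the fourth bullet, when $r_Q(n) = 0$ despite $n$ being large, the Eisenstein lower bound must fail, which forces $\beta_p(Q,n)$ to be abnormally small at some prime $p$; a local analysis then shows that such a $p$ must be anisotropic for $Q$ with $p^2 \mid n$. Once this is established, the identity $Q(p\vec{x}) = p^2 Q(\vec{x})$ transports the non-representation of $n$ to $np^{2k}$ for every $k \geq 0$. The main obstacle I expect is obtaining the sharp exponents in the Petersson norm bound through Scheithauer's formula: the Weil-representation bookkeeping at primes dividing $D(Q)$ is delicate, and producing the $D(Q)^{5/4}$ saving (as opposed to the weaker $D(Q)^{9/4}$ appearing in the third bullet) requires carefully tracking the interaction between the local hypothesis on $n$ and the local structure of the cusp expansions of $\theta_Q$.
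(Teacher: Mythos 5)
Your overall architecture (Eisenstein lower bound versus cuspidal upper bound, with Scheithauer's cusp expansions feeding a Petersson norm estimate) matches the paper, but the central step of your cuspidal analysis fails as written. You claim that $\|C\|^{2} \leq \|\theta_{Q}\|^{2}$ and propose to bound the latter. The Petersson integral of $\theta_{Q}$ against itself diverges: $\theta_{Q}$ has nonzero constant terms at the cusps, so $\iint |\theta_{Q}|^{2}\, dx\, dy$ over a fundamental domain is infinite, and there is nothing to bound. The missing idea is the genus-difference trick: since all forms $R \in \Gen(Q)$ share the same Eisenstein part, each $\theta_{Q} - \theta_{R}$ is a genuine cusp form, and $C = \sum_{R \in \Gen(Q)} e_{R} (\theta_{Q} - \theta_{R})$ with $e_{R} = \frac{1}{m \# \Aut(R)}$ summing to $1$; the triangle inequality then reduces everything to bounding $\langle \theta_{Q} - \theta_{R}, \theta_{Q} - \theta_{R} \rangle$, which is finite and is where Scheithauer's formula (applied to each $\theta_{S}$, $S \in \Gen(Q)$, at each cusp) actually enters. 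Without this reduction your integral does not converge and the exponents you are after cannot be extracted.

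Two secondary points. First, your explanation of the two expressions inside each maximum is wrong: they do not come from two different conversion routes (Deligne versus Poincar\'e/Kloosterman). The conversion from $\langle C, C\rangle$ to $|a_{C}(n)|$ is a single argument (an orthogonal basis of newform translates as in Schulze-Pillot--Yenirce, plus Cauchy--Schwarz, giving $|a_{C}(n)| \ll \langle C, C\rangle^{1/2} N^{1+\epsilon} d(n) \sqrt{n}$, refined to $N^{1/2+\epsilon}$ when $\gcd(n,N)=1$ at the cost of an ineffective lower bound on $\langle f, f\rangle$); the two terms in each maximum come from the two competing terms $ (N^{2}D)^{1/4+\epsilon}$ and $N^{1+\epsilon}$ in the Petersson norm bound itself. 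Second, in the fourth bullet the identity $Q(p\vec{x}) = p^{2} Q(\vec{x})$ transports \emph{representability} upward, not \emph{non}-representability: to conclude that $np^{2k}$ is not represented you need the converse, namely that every representation of $np^{2}$ is $p$ times a representation of $n$. That is exactly where anisotropy at $p$ together with $\ord_{p}(n) > \ord_{p}(N(Q))$ is used ($p$-stability in Hanke's sense, his Corollary 3.8.2, giving $r_{Q}(np^{2k}) = r_{Q}(n)$); the identity alone argues in the wrong direction.
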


\begin{rem}
  The result stated above in the case that $\gcd(n,N(Q)) = 1$ is
  ineffective and relies on lower bounds for $L(\Ad^{2} f, 1)$ where
  $f$ is a newform. When $f$ has CM, the $L$-function
  $L(\Ad^{2} f, s)$ has a Dirichlet $L$-function as a factor and we
  cannot rule out the possibility that this $L$-function has a Siegel
  zero. This result can be made effective for a given $Q$ at the cost
  of a finite computation.

  In contrast, the results in the second, third and fourth cases are
  effective.  While it is not especially difficult to make these
  results completely explicit, we will not take that step. Examples
  suggest that the resulting bounds would not be small enough to be
  useful in concrete applications.
\end{rem}

\begin{rem}
  In the case that $N(Q) = D(Q)$ and the successive minima of $Q$ are
  of the same magnitude, we obtain that any $n \gg D(Q)^{4+\epsilon}$
  which satisfies the strong local solubility condition for $Q$ is
  represented. This is the same quality result as obtained by Browning
  and Dietmann \cite{BrowningDietmann}. In cases where the successive
  minima of $Q$ are of different orders of magnitude or $N(Q)$ is
  substantially smaller than $D(Q)$, our result improves on Browning
  and Dietmann's.
\end{rem}

In Section~\ref{back} we define notation and review Scheithauer's
formula and elements of the theory of quadratic forms. In
Section~\ref{eis} we bound the coefficients of the Eisenstein series
portion of the theta series. In Section~\ref{pet} we bound the
Petersson norm of the cusp form component and in Section~\ref{last}
we conclude our main results.

\begin{ack}
  The author would like to thank Rainer Schulze-Pillot and Katherine
  Thompson for helpful conversations related to this work. The author would
also like to thank Gergely Harcos and Valentin Blomer for their insight
into a question asked on MathOverflow related to this work (see {\tt http://mathoverflow.net/questions/256576/}).
\end{ack}

\section{Background and notation}
\label{back}

A quadratic form in $r$ variables $Q(\vec{x})$ is integer-valued if it
can be written in the form
$Q(\vec{x}) = \frac{1}{2} \vec{x}^{T} A \vec{x}$, where $A$ is a
symmetric $r \times r$ matrix with integer entries, and even diagonal
entries. The matrix $A$ is called the Gram matrix of $Q$.  The
quadratic form $Q$ is called positive-definite if $Q(\vec{x}) \geq 0$
for all $\vec{x} \in \R^{r}$ with equality if and only if
$\vec{x} = \vec{0}$. We let $D(Q) = \det(A)$, and we define $N(Q)$ to
be the level of $Q$, which is the smallest positive integer $N$ so
that $N A^{-1}$ has integer entries and even diagonal entries. The
form $Q$ may be associated to an even lattice $L$ by setting
$L = \Z^{n}$ and defining an inner product on $L$ via
\[
  \langle \vec{x}, \vec{y} \rangle = Q(\vec{x} + \vec{y})
  - Q(\vec{x}) - Q(\vec{y}).
\]
Note that $\langle \vec{x}, \vec{x} \rangle = 2Q(\vec{x})$.
Let
$L' = \{ \vec{x} \in \R^{n} : \langle \vec{x}, \vec{y} \rangle \in \Z
\text{ for all } \vec{y} \in L \}$
denote the dual lattice of $L$. The discriminant form is the finite
abelian group $D = L'/L$ and this group is equipped with an inner
product $D \times D \to \Q/\Z$. For $\vec{x}, \vec{y} \in D$
we will denote the inner product by $\vec{x} \vec{y}$, and
following this convention we denote $\vec{x}^{2}/2 = Q(\vec{x}) \in \Q/\Z$.
We denote by $D^{c} = \{ c\delta : \delta \in D \}$ and
$D_{c} = \{ \delta \in D : c\delta = 0 \}$.

Throughout, if $n$ is a positive integer, we let $d(n)$ denote the
number of divisors of $n$. For a positive-definite integer-valued $Q$, let
$r_{Q}(n) = \#\{ \vec{x} \in \Z^{r} : Q(\vec{x}) = n \}$ and define
\[
  \theta_{Q}(z) = \sum_{n=0}^{\infty} r_{Q}(n) q^{n}, \quad q = e^{2 \pi i z}
\]
to be the theta series of $Q$. When $r$ is even, Theorem 10.9 of
\cite{Iwa} shows that $\theta_{Q}(z)$ is a holomorphic modular form of
weight $r/2$ for the congruence subgroup $\Gamma_{0}(N)$ with
Nebentypus $\chi_{Q} = \chi_{(-1)^{r/2} D(Q)}$. We denote this space
of modular forms $M_{r/2}(\Gamma_{0}(N), \chi_{Q})$ and
$S_{r/2}(\Gamma_{0}(N),\chi_{Q})$ the subspace of cusp forms.  Here
and throughout, $\chi_{D}$ denotes the Kronecker character of the
field $\Q(\sqrt{D})$. We may decompose $\theta_{Q}(z)$ as
\[
  \theta_{Q}(z) = E(z) + C(z)
\]
where $E(z) = \sum_{n=0}^{\infty} a_{E}(n) q^{n}$ is an Eisenstein series,
and $C(z) = \sum_{n=1}^{\infty} a_{C}(n) q^{n}$ is a cusp form.

Let $\Z_{p}$ be the ring of $p$-adic integers. The form $Q$ locally
represents the non-negative integer $m$ if and only if for all primes
$p$ there is a vector $\vec{x}_{p} \in \Z_{p}^{r}$ so that
$Q(\vec{x}_{p}) = m$. We say that $m$ is represented by $Q$ if there
is a vector $\vec{x} \in \Z^{r}$ with $Q(\vec{x}) = m$. We say that
$Q$ is anisotropic at the prime $p$ if
$\vec{x} \in \Z_{p}^{r} \text{ and } Q(\vec{x}) = \vec{0}$ implies
that $\vec{x} = \vec{0}$. This is known to occur only if $r \leq 4$.
Define the genus of $Q$, $\Gen(Q)$, to be the set of quadratic forms
equivalent to $Q$ over $\Z_{p}$ for all primes $p$ (and equivalent to
$Q$ over $\R$). It follows from work of Siegel \cite{Siegel} that
\begin{equation}
\label{genusaverage}
  E(z) = \frac{\sum_{R \in \Gen(Q)} \frac{\theta_{R}(z)}{\# \Aut(R)}}
{\sum_{R \in \Gen(Q)} \frac{1}{\# \Aut(R)}}
  = \sum_{m=0}^{\infty} \left(\prod_{p \leq \infty} \beta_{p}(Q;m)\right) q^{m},
\end{equation}
where 
\[
  \beta_{p}(Q;m) = \lim_{k \to \infty} \frac{\# \{ \vec{x} \in (\Z/p^{k} \Z)^{r} : Q(\vec{x})
\equiv m \pmod{p^{k}}}{p^{(r-1)k}}
\]
is the usual local density. 

If $f(z) = \sum_{n=1}^{\infty} a(n) q^{n}$, define
$f(z) | U(d) = \sum_{n=1}^{\infty} a(dn) q^{n}$ and
$f(z) | V(d) = \sum_{n=1}^{\infty} a(n) q^{dn}$.  An oldform in
$S_{k}(\Gamma_{0}(N), \chi)$ is a cusp form in the span of the images
of $V(e) : S_{k}(\Gamma_{0}(N/d), \chi) \to S_{k}(\Gamma_{0}(N), \chi)$
for $e | d$ and $d > 1$. If $f$ is a modular form of weight $k$ and $\gamma = \begin{bmatrix} a & b \\ c & d \end{bmatrix} \in \SL_{2}(\Z)$,
define
\[
  f |_{k} \gamma = (cz+d)^{-k} f\left(\frac{az+b}{cz+d}\right).
\]
The Petersson inner product on
$S_{k}(\Gamma_{0}(N), \chi)$ is defined by
\[
  \langle f, g \rangle = \frac{3}{\pi [\SL_{2}(\Z) : \Gamma_{0}(N)]}
  \iint_{\H / \Gamma_{0}(N)} f(x+iy) \overline{g(x+iy)} y^{k} \, \frac{dx \, dy}{y^{2}}.
\]
(Some authors omit the factor
$\frac{1}{[\SL_{2}(\Z) : \Gamma_{0}(N)]}$.)  A newform
$f(z) = \sum a(n) q^{n}$ is an eigenfunction of all Hecke operators
normalized so that $a(1) = 1$ and lying in the orthogonal complement
of the space of oldforms.  The Deligne bound gives the bound
$|a(n)| \leq d(n) n^{\frac{k-1}{2}}$ on the $n$th coefficient of a
newform. (In the case of $k = 2$, this result was first established by
Eichler, Shimura, and Igusa.) Distinct newforms are orthogonal with
respect to the Petersson inner product.

\section{Eisenstein contribution}
\label{eis}

In this section we give bounds on
\[
  a_{E}(n) = \frac{\pi^{2} n}{\sqrt{D(Q)}}
  \prod_{p~\text{prime}} \beta_{p}(Q;n).
\]
We make use of the reduction maps in the approach that Jonathan Hanke
takes to computing local densities in \cite{Hanke}, as well as the
explicit formulas of Yang \cite{Yang}. If $p \nmid n$ and
$p \nmid N(Q)$, then $\beta_{p}(Q;n) = 1 -
\frac{\chi_{D}(p)}{p^{2}}$.
In the event that $p | n$ and $p \nmid N(Q)$ we have
$\beta_{p}(Q;n) \geq 1 - \frac{1}{p}$.

If $Q$ is a quadratic form and $p$ is a prime, we introduce the quantity
$r_{p}(Q)$ to measure ``how anisotropic'' the form $Q$ is at the prime $p$.
Take a Jordan decomposition of $Q$ in the form
\[
p^{a_{1}} Q_{1} \bot p^{a_{2}} Q_{2} \bot \cdots \bot p^{a_{k}} Q_{k},
\]
where each $Q_{i}$ has unit discriminant and is indecomposible over $\Z_{p}$.
If $p > 2$ this forces $Q_{i}$ to have dimension $1$, while if $p = 2$,
we must allow $Q_{i}$ to be equivalent to $xy$ or $x^{2} + xy + y^{2}$. Given a
vector $\vec{x}$, we decompose
$\vec{x} = \vec{x}_{1} \bot \vec{x}_{2} \bot \cdots \bot \vec{x}_{k}$,
and we define
\[
  r_{p}(Q) = \inf_{\substack{i, \vec{x} \in \Z_{p}^{r} \\ 1 \leq i \leq k, Q(\vec{x}) = 0}}
  \ord_{p}(a_{i}) + \ord_{p}(\vec{x}_{i}).
\]
We declare $r_{p}(Q) = \infty$ if $Q$ is anisotropic at $p$. We have
$r_{p}(Q) = 0$ if and only if $Q$ is isotropic modulo $p$. It is easy to
see that if $r_{p}(Q)$ is finite, then $r_{p}(Q) \leq \ord_{p}(N(Q))$. The
main result of this section gives three different bounds on $\beta_{p}(n)$.

\begin{lem}
\label{localden}
Suppose that $Q$ is a primitive positive-definite quaternary form, $n$ is locally represented by $Q$ and $r_{p}(Q)$ is defined as above. 
\begin{enumerate}
\item If $n$ satisfies the strong local solubitility condition for $Q$,
then 
\[
  \beta_{p}(n) \geq \begin{cases}
   1 - \frac{1}{p} & \text{ if } p > 2\\
  \frac{1}{4} & \text{ if } p = 2.
\end{cases}
\]
\item If $n$ is primitively locally represented by $Q$, then
\[
  \beta_{p}(n)
  \geq \begin{cases}
    p^{-\lfloor \ord_{p}(D(Q))/2\rfloor} (1-1/p) & \text{ if } p > 2\\
    \frac{1}{16} \cdot 2^{-\lfloor (\ord_{2}(D(Q)) + 1)/2 \rfloor} & \text{ if } p = 2.
\end{cases}
\]
\item In general,
\[
  \beta_{p}(n) \geq
  \begin{cases}
  \left(1 - \frac{1}{p}\right) p^{-\min\{ r_{p}(Q), \ord_{p}(n) \}} & \text{ if } p > 2\\
   2^{-1-\min\{ r_{2}(Q), \ord_{2}(n)\}} & \text{ if } p = 2.
\end{cases}
\]
\end{enumerate}
\end{lem}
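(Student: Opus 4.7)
The proof will proceed prime-by-prime. At each prime $p$, I would work with the Jordan decomposition $Q = p^{a_{1}}Q_{1} \bot \cdots \bot p^{a_{k}}Q_{k}$ together with the reduction framework of Hanke~\cite{Hanke} and the closed-form local density formulas of Yang~\cite{Yang}. In Hanke's setup one splits the count
\[
N_{p,k}(Q;n) = \#\{\vec{x}\in(\Z/p^{k}\Z)^{4}:Q(\vec{x})\equiv n \pmod{p^{k}}\}
\]
according to which Jordan components of $\vec{x}$ are $p$-adic units: a ``good-type'' vector admits a non-singular Hensel lift and contributes density $p^{-3}$ per mod-$p$ solution; a ``zero-type'' vector yields a recursion back to a rescaled lower-level form (with $n$ replaced by $n/p^{2}$); and ``bad-type'' vectors are handled by a hybrid argument. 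The three cases of the lemma differ only in which of these contributions can be certified to be nonempty.

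For part~(1), the strong local solubility condition supplies by definition a solution $\vec{x}_{0}$ with $p\nmid A\vec{x}_{0}$ and $Q(\vec{x}_{0})\equiv n$ modulo $p$ (or modulo $8$ when $p=2$), so $\vec{x}_{0}$ is a smooth $\F_{p}$-point (respectively $\Z/8\Z$-point) of $\{Q=n\}$. Hensel lifting then shows that $\beta_{p}(Q;n)$ is at least the $p$-adic volume of such smooth solutions, and a standard count of smooth points on an affine non-degenerate quadric in four variables gives the bounds $1-1/p$ for odd $p$ and $1/4$ for $p=2$, with the additional factor of $1/4$ reflecting the need to lift from modulus $8$.

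For parts~(2) and~(3) the input is weaker and solutions need not be smooth. For primitively locally represented $n$, I would locate the unscaled Jordan block (which must contain a unit component of a primitive $\vec{x}$) and apply Yang's explicit formula to the residual form; the loss of $p^{-\lfloor \ord_{p}(D(Q))/2\rfloor}$ tracks how far the discriminant allows Hanke's bad-type reduction to push the solution into the non-smooth locus before Hensel takes over. For part~(3) the invariant $r_{p}(Q)$ is precisely tailored to this bookkeeping: by its definition any isotropic vector has some component $\vec{x}_{i}$ with $\ord_{p}(a_{i})+\ord_{p}(\vec{x}_{i})\geq r_{p}(Q)$, so after at most $\min\{r_{p}(Q),\ord_{p}(n)\}$ iterations of the zero-type reduction one either exhausts $\ord_{p}(n)$ or reaches a good-type solution, and summing the resulting geometric series of $p$-adic volumes yields the stated bound.

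The main technical obstacle is the $p=2$ case. At the prime $2$ the Jordan decomposition admits the binary blocks $xy$ and $x^{2}+xy+y^{2}$, Hensel's lemma requires lifting from modulus $8$ rather than modulus $2$, and Yang's formula acquires extra normalization factors coming from $2$-adic Gauss sums and the oddity invariant. These $2$-adic complications account for the explicit constants $1/4$, $1/16$ and $2^{-1-\min\{r_{2}(Q),\ord_{2}(n)\}}$ appearing in the three cases, and verifying each of them cleanly is where the bulk of the casework will lie.
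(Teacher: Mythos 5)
Your overall framework (Hanke's good/zero/bad-type reductions combined with Yang's formulas) is the same one the paper uses, but as written the proposal has genuine gaps in parts (2) and (3), and part (1) is imprecise. For part (1), the bound $1-1/p$ is not ``a standard count of smooth points on an affine non-degenerate quadric in four variables'': when $p \mid D(Q)$ the reduction of $Q$ mod $p$ is degenerate, and the worst case $1-1/p$ arises precisely from the cases where the unimodular Jordan block $Q_{1}$ has dimension $2$ or $3$ (for a genuinely non-degenerate quaternary quadric one gets $1 \pm 1/p^{2}$). The paper handles this by a case analysis on $\dim Q_{1} \in \{1,2,3,4\}$ using Yang's formula and explicit point counts on the isotropic locus of $Q_{1}$.

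The more serious problems are in parts (2) and (3). In part (2) you assert that a primitive solution ``must contain a unit component in the unscaled Jordan block''; this is false --- a primitive vector can vanish mod $p$ on the unscaled block and be a unit only on a scaled block, which is exactly Hanke's bad-type situation. The actual mechanism producing the exponent $\lfloor \ord_{p}(D(Q))/2\rfloor$ is an induction on $\ord_{p}(D(Q))$: writing $Q = Q_{1} \bot pQ_{2} \bot p^{2}Q_{3}$, the bad type II reduction map passes to $Q'' = Q_{1} \bot pQ_{2} \bot Q_{3}$ with $\ord_{p}(D(Q'')) = \ord_{p}(D(Q)) - 2$ at a cost of $p^{\dim Q_{3}-2} \geq p^{-1}$, and this is what your sketch does not identify. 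In part (3) you attribute the $r_{p}(Q)$ savings to iterations of the zero-type reduction, but the zero-type map leaves the form (and hence $r_{p}(Q)$) unchanged and only divides $n$ by $p^{2}$; it can only yield the $\ord_{p}(n)$ half of the minimum. The $r_{p}(Q)$ bound requires a separate induction on $r_{p}(Q)$ driven by the bad type I and II reductions, and that induction has a delicate exceptional case: when $\dim Q_{1} = 3$ the generic bad type I factor $p^{1-\dim Q_{1}} = p^{-2}$ is too lossy, and one must argue that good-type solutions then exist (for $p>2$ because a ternary unimodular form is isotropic mod $p$; for $p=2$ by enumerating the $2$-adic ternary forms and using that $n \equiv 2 \pmod 4$ forces a good-type solution of $Q_{1}$). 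Without these two inductions and the $\dim Q_{1}=3$ analysis, the stated exponents do not follow.
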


\begin{rem}
A somewhat more detailed analysis shows that when $n$ is primitively
locally represented, one has
$\beta_{p}(n) \geq p^{-\lceil \ord_{p}(N(Q))/2 \rceil} (1-1/p)$. In some cases,
this bound is superior to the one above, but in the situation
where $Q$ is equivalent over $\Z_{p}$ to $x^{2} - \epsilon y^{2} +
pz^{2} + p\epsilon w^{2}$, where $\legen{\epsilon}{p} = -1$,
the bounds are the same for $n$ with $\ord_{p}(n) = 1$.
\end{rem}

Before we begin the proof of Lemma~\ref{localden},
we review Hanke's recursive methods to compute $\beta_{p}(n)$.
To handle the cases that $p | N(Q)$, Hanke divides the solutions to
the congruence $Q(\vec{x}) \equiv n \pmod{p^{k}}$ into four classes:
good, zero, bad type I, and bad type II.

We consider a Jordan decomposition of $Q$ in the form described above.
One says that a solution
$\vec{x} = \vec{x}_{1} \bot \vec{x}_{2} \bot \cdots \bot \vec{x}_{k}$
is of \emph{good type} if there is an $i$ so that
$p^{a_{i}} \vec{x}_{i} \not\equiv 0 \pmod{p}$. The integer $n$ satisfies
the strong local solubitility condition if $n$ is locally represented
and there are good type solutions. A solution is of \emph{zero type}
if $\vec{x} \equiv 0 \pmod{p}$, and a solution is of \emph{bad type} if $\vec{x}_{i} \equiv 0 \pmod{p}$ for all $i$ with $a_{i} = 0$,
but there is some $i$ with $\vec{x}_{i} \not\equiv 0 \pmod{p}$ and $a_{i} \geq 1$. A
bad type solution has \emph{type I} if there is some $i$ so that $a_{i} = 1$
and $\vec{x}_{i} \not\equiv 0 \pmod{p}$, and has \emph{type II} if for all
$i$ with $\vec{x}_{i} \not\equiv 0 \pmod{p}$, $a_{i} \geq 2$.

Hanke gives recursive formulas for the local densities by giving
recursive ways to compute good, zero, and bad type solutions. The good
type contribution to the density,
$\beta_{p}^{{\rm Good}}(Q;n) = \beta_{p}^{{\rm Good}}(Q;n \bmod p)$ if
$p > 2$ (and for $p = 2$ we have
$\beta_{p}^{{\rm Good}}(Q;n) = \beta_{p}^{{\rm Good}}(Q;n \bmod 8)$).

The zero type contribution to the density is
$\beta_{p}^{{\rm Zero}}(Q;n) = \frac{1}{p^{2}} \beta_{p}(Q,n/p^{2})$.

In the case of the bad type
solutions, these are related to local densities of $n/p$ and $n/p^{2}$ for different forms. In particular, write $Q = Q_{1} \bot pQ_{2} \bot p^{2} Q_{3}$
where $Q_{1}$, $Q_{2}$ and $Q_{3}$ are (not necessarily indecomposable) forms whose Jordan decompositions have coefficients that are units in $\Z_{p}$.
The bad type I reduction map gives
\[
  \beta_{p}^{{\rm Bad}}(Q,n) \geq p^{1 - \dim Q_{1}} \beta_{p}(pQ_{1} \bot
Q_{2} \bot p^{2} Q_{3}; n/p),
\]
and the bad type II reduction map gives
\[
  \beta_{p}^{{\rm Bad}}(Q,n) \geq p^{\dim Q_{3} - 2} \beta_{p}(Q_{1} \bot
pQ_{2} \bot Q_{3}, n/p^{2}).
\]

\begin{proof}[Proof of Lemma~\ref{localden}]
We first give a lower bound for the contribution from the good type solutions,
assuming that some exist. This will prove part 1 of Lemma~\ref{localden}.

Since $Q$ is primitive, $\dim Q_{1} \geq 1$.  First, we consider the
case that $p \nmid n$, and in this case all solutions will be good
type solutions. If $p \nmid 2D(Q)$, the table on page 363 of
\cite{Hanke} shows that $\beta_{p}(n) \geq 1 -
\frac{1}{p^{2}}$.
Otherwise, diagonalize $Q$ and let $Q_{1}$ be the orthogonal summand
consisting of terms whose coefficients are coprime to $p$. Then,
Yang's formula gives
\[
  \beta_{p}(Q,n) = 1 + \legen{-1}{p}^{\lfloor \frac{\dim Q_{1}}{2} \rfloor}
  \legen{\det(Q_{1})}{p} \cdot p^{1 - \frac{1}{2} \dim Q_{1}} \cdot f_{1}(n)
\]
where
\[
  f_{1}(n) = \begin{cases}
    -\frac{1}{p} & \text{ if } \dim Q_{1} \text{ is even }\\
    \legen{n}{p} \cdot \frac{1}{\sqrt{p}} & \text{ if } \dim Q_{1} \text{ is odd. }
\end{cases}
\]
It follows from this that if $\dim Q_{1} = 2$, then $\beta_{p}(n) \geq 1-1/p$.
If $\dim Q_{1} = 1$ then $\beta_{p}(n) = 1 + \legen{n \det(Q_{1})}{p} \geq 2$ (since we assume that $\beta_{p}(n) > 0$),
and if $\dim Q_{1} = 3$ then $\beta_{p}(n) = 1 + \legen{-\alpha \det(Q_{1})}{p} \cdot \frac{1}{p} \geq 1 - \frac{1}{p}$.

In the case that $p = 2$, we simply enumerate all possibilities for
the $2$-adic Jordan composition of $Q$ mod $8$ and compute the local
density for each $Q$ and each $n \in \{ 1, 3, 5, 7 \}$. We find that
the minimum positive local density is $1/2$. 

Now suppose that $\ord_{p}(n) \geq 1$. When we count good type
solutions for $p > 2$, these exist if and only if $Q_{1}$ is isotropic
mod $p$, which implies that $\dim Q_{1} \geq 2$. If $p \nmid D(Q)$ and
so $\dim Q_{1} = 4$ then we have
$\beta_{p}(Q;n) \geq 1 - \frac{1}{p}$.  If $\dim Q_{1} = 3$, then
$Q_{1} = 0$ defines a conic in $\mathbb{P}^{2}$ with a point on it,
and the variety $Q_{1} = 0$ over $\F_{p}$ is isomorphic to
$\mathbb{P}^{1}$ and so there are $p+1$ points on $Q_{1} = 0$ over
$\mathbb{P}^{2}$. This gives $p(p-1)$ good type solutions mod $p$ and
so $\beta_{p}^{{\rm Good}}(Q;n) \geq 1 - \frac{1}{p}$. If
$\dim Q_{1} = 2$, then there are $p-1$ nonzero vectors mod $p$ for
which $Q_{1}(\vec{x}) = 0$ and we can choose any vector in
$(\Z/p\Z)^{4}$ whose two components correspond to that of
$\vec{x}$. We get $p^{2} (p-1)$ good type solutions and so
$\beta_{p}^{{\rm Good}}(n) \geq 1 - \frac{1}{p}$ provided any good
type solutions exist. An exhaustive enumeration of all forms over
$\Z_{2}$ modulo $8$ shows that if any good type solutions exist, then
$\beta_{p}^{{\rm Good}}(n) \geq 1/2$ if $2 \nmid n$ and
$\beta_{p}^{{\rm Good}}(n) \geq 1/4$ if $2 | n$. This establishes part
1.

For part 2, good type solutions exist if $p \nmid n$ and
this gives $\beta_{p}(n) \geq 1 - 1/p$ and $\beta_{2}(n) \geq 1/2$.
If $p | n$ and there are no good type solutions, there must be bad type I or bad type II solutions. We decompose $Q = Q_{1} \bot pQ_{2} \bot p^{2} Q_{3}$. Then in the case of bad type I solutions, we get
\[
  \beta_{p}(Q;n) \geq p^{1 - \dim Q_{1}} \beta_{p}(pQ_{1} \bot Q_{2} \bot p^{2} Q_{3}; n/p).
\]
However, in the bad type I case, we have solutions $\vec{x} = \vec{x}_{1} \bot \vec{x}_{2} \bot \vec{x}_{3}$
with $\vec{x}_{2} \not\equiv 0 \pmod{p}$, which gives good type solutions for $Q' = pQ_{1} \bot Q_{2} \bot p^{2} Q_{3}$.
Hence, $\beta_{p}(pQ_{1} \bot Q_{2} \bot p^{2} Q_{3};n/p) \geq 1-1/p$ if $p > 2$ and $1/4$ if $p = 2$.
If $\dim Q_{1} = 1$ or $2$, we get $\beta_{p}(Q;n) \geq \frac{1}{p} (1-1/p)$ for $p > 2$. If $\dim Q_{1} = 3$
and $p > 2$, then there are good type solutions, and so $\beta_{p}(Q;n) \geq 1-1/p$. If $p = 2$, we have $\beta_{p}(Q';n/p) \geq 1/4$ and so $\beta_{p}(Q;n) \geq 1/16$.

In the case of bad type II reduction, we prove the claim by induction on $\ord_{p}(D(Q))$. There must be good type solutions
or bad type I solutions when $\ord_{p}(D(Q)) = 0$ or $1$. If $\ord_{p}(D(Q)) \geq 2$ and there are no bad type I solutions, then we have
\[
  \beta_{p}(Q;n) \geq p^{\dim Q_{3} - 2} \beta_{p}(Q_{1} \bot pQ_{2} \bot Q_{3};n/p^{2}).
\]
If we let $Q'' = Q_{1} \bot pQ_{2} \bot Q_{3}$, we have $\ord_{p}(D(Q'')) = \ord_{p}(D(Q)) - 2$ and so we get
\[
  \beta_{p}(Q;n) \geq p^{-1} p^{-\lfloor \ord_{p}(D(Q''))/2 \rfloor} (1-1/p) = p^{-\lfloor \ord_{p}(D(Q))/2\rfloor} (1-1/p)
\]
when $p > 2$ and when $p = 2$ we get
\[
  \beta_{p}(Q;n) \geq \frac{1}{16} \cdot 2^{-\lfloor (\ord_{2}(D(Q))+1)/2 \rfloor},
\]
as desired. This proves part 2.

Now we turn to part 3 of the lemma. The claim already follows from
part 1 if there are any good type solutions. For the zero and bad type
solutions, we prove by induction that if $\beta_{p}(Q;n) > 0$, then
$\beta_{p}(Q;n) \geq (1-1/p) p^{-\ord_{p}(n)}$. We have
$\beta_{p}^{\rm Zero}(Q;n) = \frac{1}{p^{2}} \beta_{p}(Q;n/p^{2})$ and
so if
$\beta_{p}(Q;n/p^{2}) \geq (1-\frac{1}{p}) p^{-\ord_{p}(n/p^{2})}$,
then
\[
  \beta_{p}^{{\rm Zero}}(Q;n) \geq \frac{1}{p^{2}} \cdot
  \left(1 - \frac{1}{p}\right) p^{-\ord_{p}(n/p^{2})} = \left(1-\frac{1}{p}\right) p^{-\ord_{p}(n)}.
\]

For the bad type I solutions we have
\[
  \beta_{p}^{{\rm Bad}}(Q;n) \geq p^{1 - \dim Q_{1}} \beta_{p}(Q';n/p).
\]
This doesn't give the desired bound only when $\dim Q_{1} = 3$. If $\dim Q_{1} = 3$ and $p > 2$, then $Q_{1}$ is isotropic and there are good type solutions
and hence $\beta_{p}(Q;n) \geq 1-1/p$. In the case that $p = 2$,
we have that $Q_{1}$ is either isomorphic to $xy \bot az^{2}$ or
one of the diagonal forms $x^{2} + y^{2} + z^{2}$,
$x^{2} + 3y^{2} + 3z^{2}$, $x^{2} + y^{2} + 5z^{2}$, $x^{2} + 3y^{2} + 7z^{2}$,
$x^{2} + y^{2} + 3z^{2}$, $3x^{2} + 3y^{2} + 3z^{2}$, $x^{2} + y^{2} + 7z^{2}$,
or $3x^{2} + 3y^{2} + 7z^{2}$ (by the classification of $2$-adic forms in Jones \cite{Jones}). A bad type I solution in this case must be a solution $Q(\vec{x}) = n$ with $Q \cong Q_{1} \bot 2Q_{2}$ and $\vec{x}_{1} \equiv 0 \pmod{2}$ but $\vec{x}_{2} \not\equiv 0 \pmod{2}$. In this situation, it must be that $n \equiv 2 \pmod{4}$. However, it is easy to see that each of the possible forms listed
above for $Q_{1}$ must have $\beta_{2}^{{\rm Good}}(Q_{1},n) > 0$ and so
in this case we have $\beta_{2}(Q,n) \geq 1/4$, which proves the desired
result.

For the bad type II solutions we have
\[
  \beta_{p}^{{\rm Bad}}(Q;n) \geq p^{\dim Q_{3} - 2} \beta_{p}(Q'',n/p^{2}).
\]
Since $\dim Q_{3} \leq 3$, we always get the desired result. In all cases,
this proves that $\beta_{p}(Q;n) \geq (1-1/p) p^{-\ord_{p}(n)}$ provided $n$
is locally represented by $Q$.

Now, if $Q = p^{a_{1}} Q_{1} \bot p^{a_{2}} Q_{2} \bot \cdots \bot p^{a_{k}} Q_{k}$ is not anisotropic over $\Q_{p}$, we prove that $\beta_{p}(Q;n) \geq (1-1/p) p^{-r_{p}(Q)}$ (for $p > 2$) and $\beta_{2}(Q;n) \geq (1-1/2) 2^{-r_{2}(Q)-1}$ (for $p = 2$) for all $Q$ that are isotropic over $\Q_{p}$ and locally represent $n$, 
by induction on $r_{p}(Q)$.

In the case that $r_{p}(Q) = 0$, there are good type solutions and the desired
result holds. 

Suppose that $r_{p}(Q) = 1$, $\ord_{p}(n) \geq 1$ and that there are no good type solutions. Then we can decompose $Q = Q_{1} \bot pQ_{2} \bot p^{2} Q_{3}$ and there is a vector $\vec{x}$ with $Q(\vec{x}) = 0$
with $\vec{x} = \vec{x}_{1} \bot \vec{x}_{2} \bot \vec{x}_{3}$ and
$\vec{x}_{2} \not\equiv 0 \pmod{p}$. This implies that
there are bad type I solutions, and the reduction map gives
$\beta_{p}(Q;n) \geq p^{1 - \dim Q_{1}} \beta_{p}(Q';n/p)$ where
$Q' = pQ_{1} \bot Q_{2} \bot p Q_{3}$. The form $Q'$ has $r_{p}(Q') = 0$ and
this gives $\beta_{p}(Q;n) \geq p^{1 - \dim Q_{1}} \beta_{p}(Q';n/p)$.
Moreover, $Q_{2}$ is isotropic modulo $p$ and so $\dim Q_{1} \leq 2$. Thus,
$\beta_{p}(Q;n) \geq (1-1/p) \cdot \frac{1}{p}$, as desired.

Suppose that $r_{p}(Q) \geq 2$ and $n$ is locally represented. If there
are good type solutions, the desired result holds. If there are bad type solutions, then let $Q = Q_{1} \bot pQ_{2} \bot p^{2} Q_{3}$. If there are bad type I solutions, we get
\[
  \beta_{p}(Q;n) \geq p^{1 - \dim Q_{1}} \beta_{p}(Q';n/p)
\]
where $Q' = pQ_{1} \bot Q_{2} \bot pQ_{3}$. We have $r_{p}(Q') = r_{p}(Q) - 1$ and
so $\beta_{p}(Q';n/p) \geq (1-1/p) p^{-r_{p}(Q')}$. If $\dim Q_{1} \leq 2$,
then we get $\beta_{p}(Q;n) \geq (1-1/p) p^{-r_{p}(Q)}$. If $\dim Q_{1} = 3$,
then $p = 2$, $Q_{1}$ is anisotropic, $\ord_{2}(n) = 1$ and
all solutions to $Q' \equiv n/2 \pmod{8}$ are good type solutions.
Hence, $\beta_{2}(Q';n/2) \geq 1/2$ and we get the desired result.

If there are no bad type I solutions, bad type II solutions give
\[
  \beta_{p}(Q;n) \geq p^{\dim Q_{3} - 2} \beta_{p}(Q'';n/p^{2}),
\]
where $Q'' = Q_{1} \bot pQ_{2} \bot Q_{3}$ and $r_{p}(Q'') = r_{p}(Q) - 2$. We have
$\dim Q_{3} \geq 1$ and so if $p > 2$ we get
\[
\beta_{p}(Q;n) \geq p^{-1} (1-1/p) p^{-(r_{p}(Q) - 2)} \geq (1-1/p) p^{1-r_{p}(Q)} \geq (1-1/p) p^{-r_{p}(Q)},
\]
while if $p = 2$ we get
\[
\beta_{2}(Q;n) \geq 2^{-1} (1-1/2) 2^{-(r_{2}(Q) - 1)} \geq (1-1/2) 2^{-r_{2}(Q) - 1}.
\]

Finally, consider the case when there are no good type or bad type solutions.
This implies that $Q(\vec{x}) = n$ implies that $\vec{x} \equiv 0 \pmod{p}$
and this means that $\ord_{p}(n) \leq r_{p}(Q)$. The bound proven
above that $\beta_{p}(n;Q) \geq (1-1/p) p^{-\ord_{p}(n)}$ then implies
that $\beta_{p}(n;Q) \geq (1-1/p) p^{-r_{p}(Q)}$. This proves part 3 of the lemma
by induction.
\end{proof}



\section{Petersson norm bound}
\label{pet}

As above, let $Q$ be a positive-definite, primitive, quaternary
quadratic form, and decompose $\theta_{Q}(z) = E(z) + C(z)$. The goal
of this section is to prove the following.

\begin{thm}
\label{petbound}
For all $\epsilon > 0$, we have
\[
  \langle C, C \rangle \ll \max \{ (N(Q)^{2} D(Q))^{1/4 + \epsilon}, N(Q)^{1 + \epsilon} \}. 
\]
\end{thm}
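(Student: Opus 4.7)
The strategy is to combine Rankin--Selberg unfolding with Scheithauer's explicit Weil representation formula. First, by Rankin--Selberg unfolding applied to the weight-$2$ cusp form $C$ on $\Gamma_0(N(Q))$ with character $\chi_Q$, the Petersson norm is given (up to normalization) by
\[
  \langle C, C\rangle \asymp \frac{1}{[\SL_2(\Z):\Gamma_0(N(Q))]} \cdot \Res_{s=1} \sum_{n \geq 1} \frac{|a_C(n)|^2}{n^{s+1}},
\]
where $a_C(n)$ are the Fourier coefficients of $C$ at $\infty$. Bounding $\langle C, C\rangle$ therefore reduces to bounding the mean square $\sum_{n \leq X} |a_C(n)|^2$ for appropriate ranges of $X$.

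Next, I use Scheithauer's explicit formula for the Weil representation $\rho_L$ of $\SL_2(\Z)$ attached to the lattice $L$ of $Q$. This expresses $\theta_Q|_2 \gamma$, for each $\gamma \in \SL_2(\Z)$, as a linear combination of coset theta series $\theta_{\delta + L}$ with $\delta$ ranging over the discriminant form $D = L'/L$, the coefficients of the combination being given explicitly by Scheithauer's matrix entries. Comparing with the corresponding Weil-representation decomposition of the Eisenstein component $E$, whose coefficients are products of local densities as in Section~\ref{eis}, produces an explicit expression for $a_C(n) = r_Q(n) - a_E(n)$ exhibiting cancellation built into the Weil representation.

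I then bound $|a_C(n)|$ in two regimes. For small $n$, I use the explicit Scheithauer expressions together with the lattice-point bound $r_Q(n) \ll n/\sqrt{D(Q)} + n^{1/2 + \epsilon}$ and the Eisenstein estimates $|a_E(n)| \ll n \cdot \prod_p \beta_p(Q;n) / \sqrt{D(Q)}$ from Section~\ref{eis}. For large $n$, the Deligne bound $|a_C(n)| \ll d(n)\sqrt{n}$ for weight-$2$ cusp forms dominates. Choosing the crossover optimally and substituting into the Rankin--Selberg residue yields the claimed bound $\langle C, C\rangle \ll \max\{(N(Q)^2 D(Q))^{1/4 + \epsilon}, N(Q)^{1+\epsilon}\}$.

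The main obstacle is producing the exponent $1/4$ on $D(Q)$. A naive two-term split yields only a $D(Q)^{1/2 + \epsilon}$-type bound; the improvement to $D(Q)^{1/4+\epsilon}$ requires exploiting cancellation in the Weil-representation identities rather than bounding $r_Q(n)$ and $a_E(n)$ separately. Scheithauer's formula furnishes this cancellation in an explicit form: the matrix entries of $\rho_L(\gamma)$ contain normalizing factors of $|D|^{-1/2}$ and Gauss-sum phases which, after unfolding and squaring in the Rankin--Selberg integral, yield the sought-for $D(Q)^{1/4}$ savings. Carefully tracking these factors --- alongside the $h_\mathfrak{a}$-weights and the $\sigma_0(N(Q))$-sized family of cusps --- is the technical heart of the argument.
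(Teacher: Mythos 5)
Your overall architecture differs from the paper's, and it has a circularity at its core. The Rankin--Selberg identity says precisely that $\sum_{n\le X}|a_C(n)|^2 \sim c\,\langle C,C\rangle X^{2}$ as $X\to\infty$, so extracting the residue requires a mean-square bound that is uniform as $X\to\infty$. Your two ingredients do not supply one. In the ``large $n$'' regime you invoke ``the Deligne bound $|a_C(n)|\ll d(n)\sqrt n$,'' but $C$ is not a normalized newform: writing $C=\sum_i c_i f_i$ in a newform/oldform basis, Deligne only gives $|a_C(n)|\ll\left(\sum_i|c_i|\right)d(n)\sqrt n$, and $\sum_i|c_i|$ is controlled by $\langle C,C\rangle^{1/2}$ times a dimension factor --- exactly the quantity you are trying to bound. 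In the ``small $n$'' regime, bounding $r_Q(n)$ and $a_E(n)$ separately gives at best $\sum_{n\le X}|a_C(n)|^2\ll X^{3}/D(Q)+X^{2+\epsilon}$ (and note the individual bound $r_Q(n)\ll n/\sqrt{D(Q)}+n^{1/2+\epsilon}$ is false in general; only the average over $n\le X$ has the $D(Q)^{-1/2}$ saving), and the $X^{3}/D(Q)$ term destroys the residue as $X\to\infty$. You identify the $D(Q)^{1/4}$ saving as coming from ``cancellation in the Gauss-sum phases after squaring,'' but you give no mechanism, and unimodular phases squared away inside $|a_C(n)|^2$ cannot by themselves produce cancellation; this is the entire content of the theorem, not a detail to be tracked at the end.

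The paper avoids all of this by never passing through the coefficients at $\infty$ alone. It writes $C=\sum_{R\in\Gen(Q)}e_R(\theta_Q-\theta_R)$, reduces to a single difference $\theta_Q-\theta_R$ by the triangle inequality, and computes the Petersson integral over the fundamental domain decomposed into cuspidal neighborhoods, where the factor $e^{-2\pi\sqrt{3}\,n/w}$ effectively truncates the Fourier expansion at each cusp $a/c$ at $n\ll w\le N(Q)$, so no large-$n$ input is ever needed. Scheithauer's formula is then used with absolute values throughout: the $n$th coefficient at the cusp $a/c$ is bounded by $|D^{c}|^{-1/2}\,r_{R'}(4n)$ for an auxiliary integral form $R'$ of discriminant $\gg w^{4}|D|/|D^{c}|^{2}$, and the saving comes from the sparseness of that auxiliary lattice (via Blomer's lattice-point count and a Korkin--Zolotarev reduction argument) together with the average over cusps carried out in Lemma~\ref{sumbound} --- not from oscillation of Gauss sums. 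To salvage your route you would need an a priori, non-circular bound $\sum_{n\le X}|a_C(n)|^2\ll BX^2$ valid for all large $X$, which is essentially equivalent to the theorem itself.
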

To do this, we will bound the Petersson norm of
$\theta_{Q}(z) - \theta_{R}(z)$ where $Q$ and $R$ are two quadratic
forms that are in the same genus. We have that
\[
  C = \theta_{Q} - E(z) = \sum_{R \in \Gen(Q)} e_{R} (\theta_{Q} - \theta_{R})
\]
where $e_{R} = \frac{1}{m \# \Aut(R)}$ and $m = \sum_{S \in \Gen(Q)} \frac{1}{\# \Aut(S)}$ is the mass of the genus.
Now, using the triangle inequality, we have that
\[
  \langle C, C \rangle^{1/2} \leq \sum_{R \in \Gen(Q)} \langle e_{R} (\theta_{Q} - \theta_{R}), e_{R} (\theta_{Q} - \theta_{R})\rangle^{1/2} 
= \sum_{R \in \Gen(Q)} e_{R} \langle \theta_{Q} - \theta_{R}, \theta_{Q} - \theta_{R} \rangle^{1/2}.
\]
If $\langle \theta_{Q} - \theta_{R}, \theta_{Q} - \theta_{R} \rangle \leq B$ for all $R \in \Gen(Q)$, it follows that $\langle C, C \rangle^{1/2} \leq \sum_{R} e_{R} B^{1/2} = B^{1/2}$. Hence, it suffices to bound $\langle \theta_{Q} - \theta_{R}, \theta_{Q} - \theta_{R} \rangle$.

Let $f = \theta_{Q} - \theta_{R}$. The Petersson inner product of $f$ with itself is
\[
  \frac{3}{\pi [\SL_{2}(\Z) : \Gamma_{0}(N)]} \iint_{\mathbb{H}/\Gamma_{0}(N)} |f(x+iy)|^{2} \, dx \, dy.
\]
We can write this as
\[
  \frac{3}{\pi [\SL_{2}(\Z) : \Gamma_{0}(N)]}
  \sum_{\gamma \in \Gamma_{0}(N)/\SL_{2}(\Z)}
  \int_{-1/2}^{1/2} \int_{\sqrt{1-x^{2}}}^{\infty} \left|f |_{\gamma} (x+iy)\right|^{2} \, dy \, dx.
\]

We can decompose this as a sum over cusps $a/c$ (letting $w$ denote the width of the cusp,
so $w = \frac{N}{\gcd(c^{2},N)}$) and get
\[
  \frac{3}{\pi [\SL_{2}(\Z) : \Gamma_{0}(N)]}
  \sum_{a/c}
  \int_{-w/2}^{w/2} \int_{\sqrt{1-\{x\}^{2}}}^{\infty} \left|f |_{\gamma} (x+iy)\right|^{2} \, dx \, dy.
\]
We lower the boundary on $y$ to $\sqrt{3}/2$ and get
\tiny
\[
  \frac{3}{\pi [\SL_{2}(\Z) : \Gamma_{0}(N)]}
  \sum_{a/c} \int_{-w/2}^{w/2} \int_{\sqrt{3}/2}^{\infty}
  \left(\sum_{n=1}^{\infty} a_{a/c}(n)
  e^{2 \pi i nx/w} e^{-2 \pi ny/w} \right) \left(\sum_{m=1}^{\infty} \overline{a_{a/c}(m)} e^{-2 \pi i mx/w} e^{-2 \pi my/w}\right) \, dy \, dx.
\]
\normalsize
Here $f |_{\gamma} = \sum_{n=1}^{\infty} a_{a/c}(n) q^{n/w}$.
We get cancellation unless $n = m$ and this gives us
\begin{align*}
  \langle f, f \rangle &\leq \frac{3}{\pi [\SL_{2}(\Z) : \Gamma_{0}(N)]}
  \sum_{a/c} w \int_{\sqrt{3}/2}^{\infty}
  \sum_{n=1}^{\infty} |a_{a/c}(n)|^{2} e^{-4 \pi ny/w} \, dy\\ 
  &= \frac{3}{4 \pi^{2} [\SL_{2}(\Z) : \Gamma_{0}(N)]}
  \sum_{a/c} w \sum_{n=1}^{\infty}
  \frac{|a_{a/c}(n)|^{2}}{n/w} e^{-2 \pi \sqrt{3} n/w}.
\end{align*}
At the cost of a factor of $2$ we can replace $a_{a/c}(n)$ with the coefficient of
$\theta_{S}$ at the cusp $a/c$, for some $S \in \Gen(Q)$.

In \cite{Scheithauer}, Scheithauer works out an explicit formula for
the Weil representation attached to a quadratic form, which yields
explicit formulas for the coefficients of $\theta_{S}$ at all
cusps. To state this formula, we need some notation.  Let
$S = \frac{1}{2} \vec{x}^{T} A \vec{x}$ be a positive-definite
quadratic form and let $L$ be the corresponding lattice. Let
$D = L'/L$. For a positive integer $c$ define $\phi : D \to D$ given
by $\phi(x) = cx$. Let $D^{c}$ be the image of $\phi$ and $D_{c}$ be
the kernel of $\phi$.  Define
$D^{c*} = \{ \alpha \in D : c\gamma^{2}/2 + \alpha\gamma \equiv 0
\pmod{1} \text{ for all } \gamma \in D_{c} \}$.
By Proposition 2.1 of \cite{Scheithauer}, $D^{c*}$ is a coset of
$D^{c}$, and $2 D^{c*} = D^{c}$.

Theorem 4.7 of \cite{Scheithauer} gives that the
coefficient of $q^{n/w}$ in the Fourier expansion of $\theta_{S} |_{\gamma}$ is
\[
  \overline{\xi} \frac{1}{\sqrt{|D^{c}|}}
  \sum_{\beta \in D^{c*}} \xi_{\beta}
  \# \{ \vec{v} \in \bigcup L + \beta : \beta \in D^{c*}, S(\vec{v}) = n/w \},
\]
where $\xi$ and $\xi_{\beta}$ are roots of unity. Define $T = \{ \vec{x} \in L' : \vec{x} \bmod L \in L'/L \subseteq D^{c} \cup D^{c*} \}$. This
is a lattice that contains $L$ and all the vectors in the count above are
contained in $T$. Let $R$ be $4w$ times the quadratic form
corresponding to $T$. 
\begin{lem}
The form $R$ is an integral quadratic form.
\end{lem}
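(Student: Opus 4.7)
The plan is to reduce the claim to proving that $R(\vec{x}) = 4w\, Q(\vec{x}) \in \Z$ for every $\vec{x} \in T$. This pointwise integrality, applied to a $\Z$-basis of $T$ and to its sums $\vec{e}_i + \vec{e}_j$, immediately forces the Gram matrix of $R$ in that basis to have integer entries and even diagonal, which is the definition of an integral (integer-valued) quadratic form used in Section~\ref{back}.

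The key denominator bound is that $Q(\vec{y}) \in \tfrac{1}{N}\Z$ for all $\vec{y} \in L'$. Indeed, writing $\vec{y} = A^{-1}\vec{v}$ with $\vec{v} \in \Z^{n}$ gives
\[
2N\, Q(\vec{y}) = \vec{v}^{\,T}(N A^{-1})\vec{v},
\]
and the defining property of the level $N$ is that $NA^{-1}$ is a symmetric integer matrix with even diagonal entries. Exactly the identity $\vec{v}^{\,T}(NA^{-1})\vec{v} \in 2\Z$ then yields $N Q(\vec{y}) \in \Z$.

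Now take $\vec{x} \in T$, so $\vec{x} \in L'$ and $\vec{x} \bmod L \in D^{c} \cup D^{c*}$, and consider the two cases. If $\vec{x} \bmod L \in D^{c}$, lift to write $\vec{x} = c\vec{y} + \vec{\ell}$ with $\vec{y} \in L'$ and $\vec{\ell} \in L$. Expanding the quadratic form,
\[
Q(\vec{x}) = c^{2} Q(\vec{y}) + c\,\langle \vec{y},\vec{\ell}\rangle + Q(\vec{\ell}),
\]
where the last two terms are integers because $\langle L', L\rangle \subseteq \Z$ and $Q$ is integer-valued on $L$. Combined with $Q(\vec{y}) \in \tfrac{1}{N}\Z$ this gives
\[
Q(\vec{x}) \in \tfrac{c^{2}}{N}\Z + \Z = \tfrac{\gcd(c^{2},N)}{N}\Z = \tfrac{1}{w}\Z,
\]
so $w\,Q(\vec{x}) \in \Z$, and a fortiori $4w\,Q(\vec{x}) \in \Z$.

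For the second case $\vec{x} \bmod L \in D^{c*}$, I would invoke the relation $2 D^{c*} = D^{c}$ recorded just before the lemma (from Proposition~2.1 of \cite{Scheithauer}). This says $2\vec{x} \bmod L \in D^{c}$, so the previous paragraph applied to $2\vec{x}$ gives $w\,Q(2\vec{x}) = 4w\,Q(\vec{x}) \in \Z$. Together the two cases cover all of $T$ and finish the proof. The only nontrivial step is the $D^{c*}$ case, which is precisely why the factor $4$ appears in the scaling defining $R$; without it, the form would fail to be integral on the $D^{c*}$-portion of~$T$.
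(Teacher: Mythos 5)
Your proof is correct and follows essentially the same route as the paper: show $w\,Q(\vec{x})\in\Z$ on the $D^{c}$-part via the level condition $NA^{-1}$ integral with even diagonal, then handle the $D^{c*}$-part by passing to $2\vec{x}\in D^{c}$, which is exactly where the factor $4$ enters. The only cosmetic differences are that you cite the already-recorded fact $2D^{c*}=D^{c}$ where the paper re-derives it from the definition of $D^{c*}$, and you track the denominator via $\tfrac{c^{2}}{N}\Z+\Z=\tfrac{1}{w}\Z$ where the paper substitutes $wc^{2}=Nm$; these are equivalent.
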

It follows that the coefficient of $q^{n/w}$ in $\theta_{S}|_{\gamma}$
is bounded by $\frac{1}{\sqrt{|D^{c}|}} r_{R}(4n)$. 

\begin{proof}
First, we claim that if $\alpha \in D^{c*}$, then $2 \alpha
\in D^{c}$. This is because if $\alpha \in D^{c*}$, then the definition
gives that for all $\delta \in D_{c}$ we have
$c \delta^{2}/2 + \alpha \delta \equiv 0 \pmod{1}$
which implies that $c \delta^{2} + 2 \alpha \delta \equiv 0 \pmod{1}$,
or $\langle \delta, c\delta \rangle + \langle 2 \alpha, \delta \rangle \equiv 0 \pmod{1}$. Since $\delta \in D_{c}$, $c\delta = 0$ and
so $\langle \delta, c\delta \rangle = 0$. Thus $\langle 2\alpha, \delta \rangle = 0$ so $2 \alpha \in (D_{c})^{\perp} = D^{c}$. 

Now, if $\vec{x} \in D^{c}$, then
$\vec{x} = \vec{a} + c\vec{b}$ where $\vec{a} \in L$
and $\vec{b} \in L'$. Then we have
\begin{align*}
  \frac{1}{2} \langle \vec{x}, \vec{x} \rangle
  &= \frac{1}{2} \langle \vec{a}, \vec{a} \rangle
  + c \langle \vec{a}, \vec{b} \rangle + \frac{c^{2}}{2} \langle \vec{b}, \vec{b} \rangle.
\end{align*}
The first term is an integer, because $L$ is an even lattice.
The second term is an integer (in fact, a multiple of $c$)
because $\vec{b} \in L'$. For the third term,
if we write $w = \frac{N}{\gcd(N,c^{2})} = \frac{N}{c^{2}/m}$, then
we get $\frac{wc^{2} \langle \vec{b}, \vec{b} \rangle}{2} = \frac{N m \langle \vec{b}, \vec{b} \rangle}{2}$. From the definition,
we know that $N \langle \vec{b}, \vec{b} \rangle/2 \in \Z$ from the definition
of the level and so $w \langle \vec{x}, \vec{x} \rangle/2 \in \Z$.

Hence, if $\vec{x} \in D^{c}$, then $w S(\vec{x}) \in \Z$, while
if $\vec{x} \in D^{c*}$, then $2 \vec{x} \in D^{c}$ and so
$4w S(\vec{x}) = w S(2 \vec{x}) \in \Z$. This proves that $R$ is an integral
quadratic form. 
\end{proof}

We have that the discriminant of $R$ is $\frac{(4w)^{4} |D|}{|D^{c}|^{2}}$ or $\frac{4^{3} w^{4} |D|}{|D^{c}|^{2}}$ depending
on whether $D^{c*} \ne D^{c}$ or $D^{c*} = D^{c}$.

Putting together the computations above we see that
\[
  \langle C, C \rangle \ll \frac{1}{[\SL_{2}(\Z) : \Gamma_{0}(N)]}
  \sum_{a/c} w \sum_{n=1}^{\infty} \frac{r_{R}(4n)^{2}}{|D^{c}| (n/w)} e^{-2 \pi \sqrt{3} n/w}.
\]
We rewrite this expression using
\[
  \frac{e^{-2 \pi \sqrt{3} n/w}}{n/w} = \int_{n}^{\infty} \left(\frac{1}{x(x/w)} + \frac{2 \pi \sqrt{3}}{x}\right) e^{-2 \pi \sqrt{3} (x/w)} \, dx
\]
and get
\[
  \langle C, C \rangle
  \ll \frac{1}{[\SL_{2}(\Z) : \Gamma_{0}(N)]}
  \sum_{a/c} \int_{1}^{\infty} \left(\sum_{n \leq 4x} r_{R}(n)^{2}\right)
   \cdot \left(\frac{1}{(x/w)^{2}} + \frac{2 \pi \sqrt{3}}{x/w}\right) e^{-2 \pi \sqrt{3} (x/w)} \, dx.
\]
To estimate this expression, we now need bounds on $\sum_{n \leq x} r_{R}(n)^{2}$.
We use that
\[
  \sum_{n \leq x} r_{R}(n)^{2} \leq
  \left(\sum_{n \leq x} r_{R}(n)\right) \cdot \left(\max_{n \leq x} r_{R}(n)\right).
\]
We use the bound
\[
  \sum_{n \leq x} r_{R}(n) \ll \frac{x^{2}}{D(R)^{1/2}}
  + O\left(\frac{x^{3/2}}{D(R)^{1/2} \lambda_{\rm max}^{-1/2}}\right)
  \ll \frac{x^{2}}{D(R)^{1/2}} + x^{3/2}.
\]
which is given in the proof of Lemma 4.1 of \cite{Blomer}. Here $\lambda_{\rm max}$ is the largest eigenvalue of the Gram matrix of $R$.

To bound $\max_{n \leq x} r_{R}(n)$, we follow the argument given in
the answer to MathOverflow question 256576 (by GH from MO). We assume
that $R$ is Korkin-Zolotarev reduced. This implies that we can write
\[
  R = a_{1} x^{2} + a_{2} (y + m_{12} x)^{2} + a_{3} (z + m_{13} x + m_{23} y)^{3} + a_{4} (w + m_{12} x + m_{13} y + m_{14} z)^{2},
\]
where $a_{1} a_{2} a_{3} a_{4} = \frac{1}{16} D(R)$ and
$a_{i} \geq \frac{3}{4} a_{i+1}$ and $|m_{ij}| \leq 1/2$. (See Theorem
2.1.1 from \cite{Kitaoka}.)

We write
\[
  r_{R}(n) = \sum_{z} \sum_{w} \# \{ (x,y) \in \Z^{2} :
  R(x,y,z,w) = n \}.
\]
There are at most $\left(2 \sqrt{\frac{n}{a_{3}}} + 1 \right) \left(2 \sqrt{\frac{n}{a_{4}}} + 1\right)$ choices of the pair $(z,w)$. For each such choice,
$R(x,y,z,w)$ is a binary quadratic polynomial (with coefficients
that are polynomial in $n$). The proof of Lemma 8 in
\cite{BlomerPohl} implies that if $P = ax^{2} + bxy + cy^{2} + dx + ey + f$,
with $b^{2} - 4ac < 0$, then number of solutions to $P(x,y) = 0$ is at most
$6 d(|4a (b^{2} - 4ac)(4acf + bd^{3} - ae^{2} - b^{2} f - cd^{2})|)$.
Using that $d(n) = O_{\epsilon}(n^{\epsilon})$ gives that
\[
  r_{R}(n) \ll \left(2 \sqrt{\frac{n}{a_{3}}} + 1 \right) \left(2 \sqrt{\frac{n}{a_{4}}} + 1\right) D(R)^{\epsilon} n^{\epsilon}.
\]
We have that $a_{3} a_{4} \geq \frac{D(R)}{16 a_{1} a_{2}} \gg D(R)^{1/2}$,
and this gives $r_{R}(n) \ll n^{1+\epsilon} D(R)^{-1/4+\epsilon} + n^{1/2}$.
Combining this bound with the one for $\sum_{n \leq x} r_{R}(n)^{2}$, we get
\[
  \sum_{n \leq x} r_{R}(n)^{2} \ll \frac{x^{3 + \epsilon}}{D(R)^{3/4 - \epsilon}} + \frac{x^{5/2 + \epsilon}}{D(R)^{1/4 - \epsilon}} + x^{2}.
\]
We plug this into the bound on $\langle C, C \rangle$ and get
\small
\[
  \langle C, C \rangle \ll \frac{1}{[\SL_{2}(\Z) : \Gamma_{0}(N)]}
  \sum_{a/c} \frac{1}{|D^{c}|} \int_{1}^{\infty} \left(\frac{x^{3 + \epsilon}}{D(R)^{3/4 - \epsilon}} + \frac{x^{5/2 + \epsilon}}{D(R)^{1/4 - \epsilon}} + x^{2}\right)
\left(\frac{1}{x/w} + \frac{1}{(x/w)^{2}}\right) e^{-2 \pi \sqrt{3} (x/w)} \, dx.
\]
\normalsize
Now, we set $u = x/w$, so $du = dx/w$. We use that
$\int_{1/w}^{\infty} u^{\alpha} e^{-2 \pi \sqrt{3} u} \, du \ll 1$ for $\alpha > -1/2$.
This gives
\[
  \langle C, C \rangle
  \ll \frac{1}{[\SL_{2}(\Z) : \Gamma_{0}(N)]}
  \sum_{a/c} \frac{w}{|D^{c}|} \left(w^{3+\epsilon} D(R)^{-3/4 + \epsilon}
  + w^{5/2+\epsilon} D(R)^{-1/4 + \epsilon} + w^{2}\right).
\]

Since $D(R) \gg \frac{w^{4} |D|}{|D^{c}|^{2}}$ we get
\[
  \langle C, C \rangle
  \ll \frac{1}{[\SL_{2}(\Z) : \Gamma_{0}(N)]}
  \sum_{a/c} \frac{w^{1+\epsilon} |D^{c}|^{1/2 - \epsilon}}{|D|^{3/4 - \epsilon}}
  + \frac{w^{5/2 + \epsilon}}{|D|^{1/4 + \epsilon} |D^{c}|^{1/2 - \epsilon}}
  + \frac{w^{3}}{|D^{c}|} .
\]
The first term above is bounded by
$N^{\epsilon} \frac{1}{|D|^{1/4-\epsilon}} \cdot \frac{1}{[\SL_{2}(\Z) : \Gamma_{0}(N)]} \sum_{a/c} w = \frac{N^{\epsilon}}{|D|^{1/4-\epsilon}}$. 

is much smaller than the third,
as $|D^{c}| \leq |D|$.

\begin{lem}
\label{sumbound}
We have
\[
  \frac{1}{[\SL_{2}(\Z) : \Gamma_{0}(N)]}
  \sum_{a/c} \frac{w^{2}}{|D^{c}|} \ll |D|^{\epsilon}.
\]
\end{lem}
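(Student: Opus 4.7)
The plan is to exploit multiplicativity of all quantities involved. Writing $N = N(Q)$ and $n_{p} = v_{p}(N)$, the number of cusps of $\Gamma_{0}(N)$ with denominator $c \mid N$ is $\varphi(\gcd(c, N/c)) = \prod_{p} \varphi(p^{\min(c_{p},\, n_{p}-c_{p})})$ with $c_{p} = v_{p}(c)$; the width $w(c) = N/\gcd(c^{2},N)$ factors as $\prod_{p} p^{\max(n_{p}-2c_{p},\, 0)}$; and decomposing $D$ over its Sylow subgroups as $D = \bigoplus_{p \mid N} D_{p}$ gives $|D^{c}| = \prod_{p} |D_{p}^{p^{c_{p}}}|$. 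Since $[\SL_{2}(\Z):\Gamma_{0}(N)] = \prod_{p \mid N} p^{n_{p}}(1+1/p)$ also factors, the left-hand side of Lemma~\ref{sumbound} decomposes as $\prod_{p \mid N} \Sigma_{p}$, where
\[
\Sigma_{p} = \frac{1}{p^{n_{p}}(1+1/p)} \sum_{c=0}^{n_{p}} \varphi(p^{\min(c,\, n_{p}-c)}) \frac{p^{2 \max(n_{p}-2c,\, 0)}}{|D_{p}^{p^{c}}|}.
\]

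The central step is to prove $\Sigma_{p} \ll 1$ with an absolute implied constant. Writing $D_{p} \cong \bigoplus_{i} \Z/p^{e_{i}}\Z$, we have $|D_{p}^{p^{c}}| = \prod_{i} p^{(e_{i}-c)_{+}}$; since the exponent of $D_{p}$ equals $n_{p}$ for odd $p$ (and agrees with $n_{p}$ up to a factor of $2$ at $p = 2$), some $e_{i}$ is essentially $n_{p}$, giving the lower bound $|D_{p}^{p^{c}}| \gg p^{(n_{p}-c)_{+}}$. I would then split the sum at $c = n_{p}/2$. For $c \leq n_{p}/2$, combining $\varphi(p^{c}) \leq p^{c}$ with the lower bound on $|D_{p}^{p^{c}}|$ shows each term is $\ll p^{c} \cdot p^{2(n_{p}-2c)} / p^{n_{p}-c} = p^{n_{p}-2c}$, and the geometric sum is $O(p^{n_{p}})$. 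For $c > n_{p}/2$ the factor $p^{2 \max(n_{p}-2c, 0)}$ equals $1$ and the term is at most $\varphi(p^{n_{p}-c}) \leq p^{n_{p}-c}$, which sums to $O(p^{n_{p}/2})$. Dividing by $p^{n_{p}}(1+1/p)$ yields $\Sigma_{p} \ll 1$.

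To finish, I would invoke the standard estimate $C^{\omega(N)} \ll_{\epsilon} N^{\epsilon}$ (immediate from $\omega(N) \ll \log N / \log\log N$, or from $d(N) \ll_{\epsilon} N^{\epsilon}$). Combined with the fact that $N$ is bounded by a fixed power of $D(Q) = |D|$ for integer-valued quaternary forms, this yields $\prod_{p \mid N} \Sigma_{p} \ll_{\epsilon} |D|^{\epsilon}$, as desired. The main obstacle will be cleanly handling the $p=2$ case, where the even-diagonal condition in the definition of the level can make $n_{2}$ exceed the exponent of $D_{2}$ by one; this weakens the lower bound on $|D_{2}^{2^{c}}|$ by at most a constant factor, so the case analysis above goes through with minor bookkeeping.
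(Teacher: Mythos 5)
Your proof is correct, and it follows the same overall strategy as the paper --- factor the cusp sum into local factors over the primes $p \mid N$, bound each local factor by an absolute constant, and finish with $C^{\omega(N)} \ll_{\epsilon} N^{\epsilon} \ll |D|^{\epsilon}$ --- but your treatment of the local factor, which is the heart of the argument, is genuinely simpler. The paper rewrites $|D^{c}| = |D|/|D_{c}|$, invokes the full invariant-factor structure of the Sylow $p$-subgroup of $D$ (namely $\Z/p^{a_{1}}\Z \times \Z/p^{a_{2}}\Z \times \Z/p^{a_{3}}\Z$ with $a_{1} \leq a_{2} \leq a_{3} = \ord_{p}(N)$), evaluates the local factor as an explicit three-part sum, and then bounds it by what it calls a ``somewhat tedious analysis of the cases'' to obtain $h(p) \leq 2$. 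You instead keep $|D^{c}|$ itself and use only the single structural fact that the largest invariant factor of $D_{p}$ is $p^{n_{p}}$ (up to one factor of $2$ when $p = 2$, which you correctly flag and absorb into the constant), giving $|D_{p}^{p^{c}}| \geq p^{\max\{n_{p}-c,\,0\}}$; with that crude lower bound both ranges $c \leq n_{p}/2$ and $c > n_{p}/2$ reduce to geometric series and the case analysis disappears. The price is that you lose the extra decay $p^{-a_{1}-a_{2}}$ visible in the paper's exact local factor, but since the lemma only needs $O(1)$ per prime, nothing is lost; the closing step using $N \leq 2|D|$ to convert $N^{\epsilon}$ into $|D|^{\epsilon}$ is also sound.
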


The lemma implies that the second term above
\[
\frac{1}{[\SL_{2}(\Z) : \Gamma_{0}(N)]}
\sum_{a/c} \frac{w^{5/2 + \epsilon}}{|D|^{1/4 + \epsilon} |D^{c}|^{1/2 - \epsilon}}
\ll \frac{1}{[\SL_{2}(\Z) : \Gamma_{0}(N)]}
\sum_{a/c} \frac{w^{2} N^{1/2 + \epsilon} |D|^{1/4}}{|D^{c}|}
\ll N^{1/2 + \epsilon} |D|^{1/4 + \epsilon},
\]
since $w \leq N$. The third term above is at most
\[
\frac{1}{[\SL_{2}(\Z) : \Gamma_{0}(N)]}
\sum_{a/c} \frac{w^{3}}{|D^{c}|}
\ll \frac{1}{[\SL_{2}(\Z) : \Gamma_{0}(N)]}
\sum_{a/c} \frac{w^{2} N}{|D^{c}|} \ll N |D|^{\epsilon}.
\]
It follows from this that $\langle C, C \rangle \ll \max \{ (N(Q)^{2} |D(Q)|)^{1/4 + \epsilon}, N(Q)^{1+\epsilon} \}$. This reduces Theorem~\ref{petbound} to Lemma~\ref{sumbound}.

\begin{proof}[Proof of Lemma~\ref{sumbound}]
For a given divisor $c$ of $N$, the number of cusps $a/c$ with
denominator $c$ is $\phi(\gcd(c,N/c))$, where $\phi$ is Euler's
totient. Moreover, by the first isomorphism theorem, we have that $|D|
= |D^{c}| |D_{c}|$.  We write
\begin{align*}
\frac{1}{[\SL_{2}(\Z) : \Gamma_{0}(N)]} \sum_{a/c} \frac{w^{2}}{|D^{c}|}
&= \frac{1}{[\SL_{2}(\Z) : \Gamma_{0}(N)]} \sum_{c | N}
\frac{N^{2} (w/N)^{2}}{|D|/|D_{c}|} \cdot \phi(\gcd(c,N/c))\\
&= \frac{N^{2}}{|D| [\SL_{2}(\Z) : \Gamma_{0}(N)]}
  \sum_{c | N} |D_{c}| (w/N)^{2} \phi(\gcd(c,N/c)).
\end{align*}
Recalling that $w = \frac{N}{\gcd(c^{2},N)}$ we see that
$w/N = \frac{1}{\gcd(c^{2},N)} = \frac{1}{c} \cdot \frac{1}{\gcd(c,N/c)}$.
Define $g(c) = |D_{c}| \frac{\phi(\gcd(c,N/c))}{c^{2} \gcd(c,N/c)^{2}}$. It is not hard to see that $g(c)$ is a multiplicative function of $c$ and hence
$f(k) = \sum_{c | k} g(c)$ is multiplicative.

Let $p$ be a prime divisor of $|D|$. We will assume that $p > 2$ and
at the end indicate briefly the modifications that must occur if
$p = 2$. If $p > 2$, then the fact that $Q$ is primitive implies that
the $p$-Sylow subgroup of $D$ is
$\Z/p^{a_{1}} \Z \times \Z/p^{a_{2}} \Z \times \Z/p^{a_{3}} \Z$ where
$0 \leq a_{1} \leq a_{2} \leq a_{3}$. We have $\ord_{p}(N) = a_{3}$
and $\ord_{p}(|D|) = a_{1} + a_{2} + a_{3}$.  This gives \scriptsize
\[
  f(p^{a_{3}}) =
  1 + \sum_{i=1}^{a_{1}} \frac{p^{3i} p^{\min\{ i-1, a_{3} - i-1\}} (p-1)}{p^{2i}
  p^{\min \{2i, 2a_{3} - 2i\}}}
  + \sum_{i=a_{1}+1}^{a_{2}} \frac{p^{2i} p^{\min\{ i-1, a_{3} - i - 1\}} (p-1)}{p^{2i} p^{\min \{2i, 2a_{3} - 2i\}}} +
  \sum_{i=a_{2}+1}^{a_{3}} \frac{p^{i} p^{\min\{ i-1, a_{3} - i - 1\}} (p-1)}{p^{2i} p^{\min \{2i, 2a_{3} - 2i\}}}.
\]
\normalsize
We rewrite this as
\[
  1 + \frac{p-1}{p} \left[
  \sum_{i=1}^{a_{1}} p^{\max \{ 2i - a_{3}, 0\}}
  + \sum_{i=a_{1}+1}^{a_{2}} p^{\max \{-i, i - a_{3} \}}
  + \sum_{i=a_{2}+1}^{a_{3}} p^{\max \{ -2i, -a_{3} \}}\right].
\]
The second sum is largest when $a_{2} = a_{3}$
and we get $p^{a_{1} + 1 - a_{3}} + p^{a_{1} + 2 - a_{3}} + \cdots + p^{-1} + 1
\leq \frac{1}{1 - 1/p}$. The third sum is at most $p^{-2} + p^{-4} + \cdots
= \frac{1}{p^{2} - 1}$.

When $2a_{1} \leq a_{3}$, the first sum is $a_{1}$. When $2a_{1} > a_{3}$,
the largest term in the first sum is $p^{2a_{1} - a_{3}}$ and each
other term is smaller by a factor of at least $p$. Thus, the first
sum is bounded by $p^{2a_{1} - a_{3}} \cdot \left(1 + \frac{a_{1} - 1}{p}\right)$.
We get then that
\[
  \frac{1}{[\SL_{2}(\Z) : \Gamma_{0}(N)]} \sum_{a/c}
  \frac{w^{2}}{|D^{c}|}
  = \prod_{p | N} h(p),
\]
where 
\[
  h(p) = \begin{cases}
  \frac{p}{p+1} p^{-a_{1}-a_{2}} \left(1 + \frac{p-1}{p}
  \left[ a_{1} + \frac{p}{p-1} + \frac{1}{p^{2} - 1}\right]\right)
  & \text{ if } 2a_{1} \leq a_{3}\\
  \frac{p}{p+1} p^{-a_{1} - a_{2}} \left(1 +
  \frac{p-1}{p} \left[ p^{2a_{1} - a_{3}} \cdot \left(1 + \frac{a_{1} - 1}{p}\right)
  + \frac{p}{p-1} + \frac{1}{p^{2} - 1}\right]\right) & \text{ if } 2a_{1} > a_{3}.
\end{cases}
\]
A somewhat tedious analysis of the cases shows that for all $p \geq 3$,
$h(p) \leq 2$. This gives
\[
  \frac{1}{[\SL_{2}(\Z) : \Gamma_{0}(N)]}
  \sum_{c | N} \frac{w^{2}}{|D^{c}|} \phi(\gcd(c,N/c))
  \leq \prod_{p | N} 2 = 2^{\omega(N)} \ll |D|^{\epsilon}.
\]
(If $p = 2$, then we can
have $(\Z/2\Z)^{4} \subseteq D$, but we cannot have $(\Z/4\Z)^{4} \subseteq D$.
This means that we can have the Sylow $2$-subgroup of $D$ is isomorphic
to $\Z/2^{a_{1}} \Z \times \Z/2^{a_{2}} \Z \times \Z/2^{a_{3}} \Z \times \Z/2^{a_{4}} \Z$, but $0 \leq a_{1} \leq 1$. The argument above requires no change except
replacing $a_{3}$ with $a_{4}$ in the event that $a_{1} = 0$. If $a_{1} = 1$,
we add a fourth summation consisting of one term, and the size of that term
is at most $4$. This only affects the size of the implied constant.)
\end{proof}

\section{Proof of main results}
\label{last}

In this section we prove Theorem~\ref{main}. First, we rely on
the preprint \cite{RSPAY} of Rainer Schulze-Pillot and
Abdullah Yenirce to translate the bound on $\langle C, C \rangle$
given above into a bound on the Fourier coefficient $|a_{C}(n)|$. The weight
$2$ case of Theorem~10 of \cite{RSPAY} gives that
\[
  |a_{C}(n)| \leq 4 \pi e^{4 \pi} \left(\langle C, C \rangle
\dim S_{2}(\Gamma_{0}(N),\chi)\right)^{1/2} d(n) \sqrt{n} N^{1/2}
\prod_{p | N} \frac{\left(1 + \frac{1}{p}\right)^{1/3}}{\sqrt{1 - \frac{1}{p^{4}}}}.
\]
This can be restated as
$|a_{C}(n)| \ll \langle C, C \rangle^{1/2} N^{1 + \epsilon} d(n)
\sqrt{n}$.
We use this bound in the second, third, and fourth parts of the main
theorem.

For the first part of the main theorem, we need a more detailed analysis
of the work of Schulze-Pillot and Yenirce. It is well-known that
$S_{2}(\Gamma_{0}(N),\chi)$ is spanned by newforms $f$ of level $M | N$
with charater $\chi$ and their ``translates'', that is, the images
under the operator $V(d)$ (for $d$ dividing $N/M$). Given a newform
$f$ of level $M | N$ and a prime $p$ dividing $N/M$, let
$\tilde{f} = \frac{f}{\sqrt{\langle f,f \rangle}}$. Theorem~8 of
\cite{RSPAY} gives an explicit orthogonal basis $\{ g_{0}, g_{1}, \ldots, g_{r} \}$ for the space $W_{p}(f)$, the space spanned by $f, f|V(p), \ldots, 
f|V(p^{r})$. The only basis elements that involve $f$ are $g_{0} = \tilde{f}$,
\[
  g_{1} = \begin{cases}
    p \tilde{f} | V(p) - \frac{\overline{a(p)}}{p} \tilde{f}
  & \text{ if } p \nmid N\\
   p \tilde{f} | V(p) - \frac{\overline{a(p)}}{p +1} \tilde{f} & \text{ if } p | N, \end{cases}
\]
and
\[
  g_{2} = p^{2} \tilde{f} | V(p^{2}) - \overline{a(p)} \tilde{f} | V(p)
  + \frac{\overline{\chi(p)}}{p} \tilde{f}
\]
and then only if $p | N$. Here $a(p)$ is the eigenvalue of the $p$th Hecke
operator acting on $f$, which is also the $p$th Fourier coefficient
of $f$. Moreover,
$\langle g_{i}, g_{i} \rangle = 1 + O(p^{-1/2})$.

In general, the space of ``translates'' of the newform $f$ is isometric
to the tensor product of the spaces $W_{p_{i}}(f)$ for the $p_{i} | M/N$
with the isometry mapping
\[
  \tilde{f}|V(p_{1}^{r_{1}}) \otimes
  \tilde{f}|V(p_{2}^{r_{2}}) \otimes \cdots
  \otimes \tilde{f}|V(p_{z}^{r_{z}}) \to
  \tilde{f} | V(p_{1}^{r_{1}} p_{2}^{r_{2}} \cdots p_{z}^{r_{z}}).
\]
(This is given on the bottom of page 4 and the top of page 5 of \cite{RSPAY}.)
As a consequence, there is an orthonormal basis for $W_{p}(f)$ with the property
that if $\gcd(n,N) = 1$, then the $n$th Fourier coefficient of every
basis element is $\ll \frac{d(n) \sqrt{n}}{\langle f, f \rangle} (1 + 1/p^{1/2})$, because if $\gcd(n,N) = 1$, no contribution is made to the $n$th coefficient from $f | V(p^{i})$ if $i \geq 1$. Using the tensor product isometry, we come
to the conclusion that if $C(z) = \sum c_{i} h_{i}$
is an expression for $C$ in terms of the orthonormal basis constructed
by Schulze-Pillot and Yenirce, then for $\gcd(n,N) = 1$, the $n$th coefficient
of $C(z)$ is bounded by 
\[
  |a_{C}(n)| \ll \frac{\left(\sum |c_{i}|\right) d(n) \sqrt{n} \prod_{p | N} 
(1+1/p^{1/2})}{\min_{f} \langle f, f \rangle}.
\]
Iwaniec and Kowalski (\cite{IK}, Corollary 5.45, page 140) give
the lower bound $\langle f, f \rangle \gg N^{-\epsilon}$, which is ineffective.
Using that $\langle C, C \rangle = \sum |c_{i}|^{2}$ and the Cauchy-Schwarz inequality
\[
  \sum |c_{i}| \leq \left(\sum |c_{i}|^{2}\right)^{1/2}
\left(\dim S_{2}(\Gamma_{0}(N,\chi)\right)^{1/2}
\]
we get that $|a_{C}(n)| \ll d(n) \sqrt{n} N^{1/2 + \epsilon} \langle C, C \rangle^{1/2}$ if $\gcd(n,N) = 1$.

\begin{proof}[Proof~of~Theorem~\ref{main}]
In the case that $\gcd(n,N(Q)) = 1$ and $n$ is locally represented,
we have that $\beta_{p}(n) \geq 1-1/p$ and $\beta_{2}(n) \gg 1$. This gives
$a_{E}(n) \gg n^{1-\epsilon} D(Q)^{-1/2}$. Theorem~\ref{petbound} gives
\[
  \langle C, C \rangle \ll \max \{ N(Q)^{1/2+\epsilon} D(Q)^{1/4 + \epsilon},
N(Q)^{1 + \epsilon} \}
\]
and so $|a_{C}(n)| \ll d(n) \sqrt{n} \max \{ N(Q)^{3/4 + \epsilon} D(Q)^{1/8 + \epsilon}, N(Q)^{1 + \epsilon} \}$. It follows that $a_{E}(n) > |a_{C}(n)|$ if
$n \gg \max \{ N(Q)^{3/2+\epsilon} D(Q)^{5/4 + \epsilon}, N(Q)^{2+\epsilon} D(Q)^{1+\epsilon} \}$.

In the case that $n$ satisfies the strong local solubility condition,
then Lemma~\ref{localden} again gives that $\beta_{p}(n) \geq 1-1/p$ and $\beta_{2}(n) \geq 1/4$ and so $a_{E}(n) \gg n^{1 - \epsilon} D(Q)^{-1/2}$.
We use the general bound $|a_{C}(n)| \ll \langle C, C \rangle^{1/2} N^{1+\epsilon} d(n) \sqrt{n}$ which gives
\[
  |a_{C}(n)| \ll \max \{ N(Q)^{5/4 + \epsilon} D(Q)^{1/8 + \epsilon},
  N(Q)^{3/2 + \epsilon} \} d(n) \sqrt{n}.
\]
It follows that $a_{E}(n) > |a_{C}(n)|$ if $n \gg \max \{ N(Q)^{5/2 + \epsilon} D(Q)^{5/4 + \epsilon}, N(Q)^{3 + \epsilon} D(Q)^{1 + \epsilon} \}$. 

In the case that $n$ is primitively represented by $Q$, Lemma~\ref{localden}
gives $\beta_{p}(n) \gg p^{-\ord_{p}(D(Q)/2)} (1-1/p)$
for $p | 2D(Q)$. We have $\beta_{p}(n) \geq 1-1/p$ for $p \nmid 2D(Q)$
which gives $a_{E}(n) \gg n^{1-\epsilon} D(Q)^{-1-\epsilon}$ provided $n$ is primitively locally represented by $Q$. We use the same bound on $|a_{C}(n)|$ as
in the previous case and this gives $a_{E}(n) > |a_{C}(n)|$ if $n
\gg \max \{ N(Q)^{5/2 + \epsilon} D(Q)^{9/4 + \epsilon},
N(Q)^{3 + \epsilon} D(Q)^{2 + \epsilon} \}$. 

Finally, Lemma~\ref{localden} implies that if $n$ is locally represented
by $Q$, then either $\beta_{p}(n)
\geq \left(1-\frac{1}{p}\right) p^{-\ord_{p}(N(Q))}$ or $\ord_{p}(n) > \ord_{p}(N(Q))$. This latter case can only occur if $r_{p}(Q) = \infty$, namely when
$Q$ is anisotropic at $p$. 

If $n$ is locally represented by $Q$ and
$\beta_{p}(n) \geq (1-1/p) p^{-\ord_{p}(N(Q))}$ for all $p$, then we
have $a_{E}(n) \gg \frac{n}{\sqrt{D(Q)} N(Q)^{1+\epsilon}}$.  Using
that
$|a_{C}(n)| \ll \max \{ N(Q)^{5/4 + \epsilon} D(Q)^{1/8 + \epsilon},
N(Q)^{3/2 + \epsilon} \} d(n) \sqrt{n}$
we get that $n$ is represented by $Q$ if
$n \gg \max \{ N(Q)^{9/2 + \epsilon} D(Q)^{5/4 + \epsilon}, N(Q)^{5 +
  \epsilon} D(Q)^{1 + \epsilon} \}$.

Hence, if $n$ is larger than this bound, locally represented,
but not represented by $Q$, then we must have
$\ord_{p}(n) > \ord_{p}(N(Q))$ for some prime $p | N(Q)$ that is an anisotropic
prime for $Q$. This implies that $n$ is
$p$-stable in the terminology of Hanke (see Definition~3.6 of \cite{Hanke}) and
Corollary 3.8.2 implies that $r_{Q}(n) = r_{Q}(np^{2k})$ for any $k \geq 1$.
Hence, $p^{2} | n$ and $r_{Q}(np^{2k}) = 0$ for all $k \geq 1$, as desired.
\end{proof}

\bibliographystyle{amsplain}
\bibliography{qfrefs}

\end{document}